\newtheorem{theorem}{Theorem}[section]
\newtheorem{corollary}[theorem]{Corollary}
\newtheorem{lemma}[theorem]{Lemma}
\newtheorem{proposition}[theorem]{Proposition}
\theoremstyle{definition}
       \newtheorem{definition}[theorem]{Definition}
       \newtheorem{remark}[theorem]{Remark}
       \newtheorem{parrafo}[theorem]{{$\!$}}
\newcommand{\nat}{\mathbb N}
\newcommand{\calo}{{\mathcal {O}}}
\newcommand{\codim}{{\rm codim}}
\newcommand{\ord}{{\rm ord}}
\newcommand{\Sing}{{\rm Sing}}
\newcommand{\Spec}{{\rm Spec}}
\newcommand{\R}{{\mathcal R}}
\newcommand{\G}{{\mathcal G}}
\newcommand{\C}{{\mathcal C}}
\renewcommand{\L}{{\mathcal L}}
\newcommand{\M}{{\mathfrak M}}
\renewcommand{\S}{{\mathbb S}}
\newcommand{\De}{\Delta^{(e)}}
\newcommand{\I}{{\mathbb I}}
\newcommand{\In}{\rm In}
\title{The $\tau$-invariant and elimination}
\author{Ang\'elica Benito}
\thanks{2000 {\em Mathematics subject classification. 14E15.}}
 \thanks{The author is partially supported by MTM2009-07291.}
\address{Dpto. Matem\'aticas,  Universidad
Aut\'onoma de Madrid and Instituto de Ciencias Matem\'aticas CSIC-UAM-UC3M-UCM \\
Ciudad Universitaria de Cantoblanco, 28049 Madrid, Spain}
\email[Ang\'elica Benito]{angelica.benito@uam.es}
\keywords{Singularities. Differential operators. Rees algebras.}
\begin{document}

\maketitle

\begin{abstract}
In this paper we present some results showing the good behavior of the $\tau$-invariant of a Rees algebra with integral closure and elimination (of variables).
\end{abstract}

\section{Introduction}
Hironaka's Theorem of embedded desingularization (\cite{Hir64}) was proven, over fields of characteristic zero, by induction on the dimension of the ambient space. It makes use of the existence of some special smooth hypersurfaces in the ambient space (\emph{hypersurfaces of maximal contact}). The problem of resolution of singularities is reformulated as an equivalent problem in these smooth hypersurfaces, that is, in a smooth scheme in one dimension less.
This form of induction holds exclusively over fields of characteristic zero, but fails over fields of positive characteristic. 

Results on resolution of singularities in positive characteristic in small dimension are due to Abhyankar. 
More recently some  strategies to deal with resolution of singularities over arbitrary fields have appeared in  works of  Kawanoue-Matsuki (\cite{Kaw}, \cite{KM}), Hironaka (\cite{Hironaka06}), W\l odarczyk (\cite{Wlo1}), Cossart-Piltant (\cite{CP1}, \cite{CP2}), Hauser (\cite{HaKang}), Villamayor (\cite{VV4}, \cite{VKyoto}) and Bravo-Villamayor (\cite{BV3}). 

These last three papers make use of a  form of induction in which hypersurfaces of maximal contact are replaced by generic projections on smooth schemes of lower dimension; and restrictions to these smooth hypersurfaces are replaced by a different form of elimination of variables. In this paper, we focus on these new form of induction.

When treating problems of embedded resolution of singularities, it is natural to identify  ideals with the same integral closure, for example when dealing with embedded principalization, also called Log-resolution of ideals, and also in Hironaka's notion of idealistic exponents. 

In this paper embedded ideals and idealistic exponents are reformulated as Rees algebras. The analog of two ideals with the same integral closure (as ideals) will be that of two Rees algebras with the same integral closure (as algebras).

In Theorem \ref{equal_tau} it is shown that the main invariant  considered in most inductive arguments (the Hironaka's $\tau$-invariant) it is compatible with integral closure of algebras (a similar result appears in Kawanoue's work,  \cite{Kaw}). 

The main result of this work, Theorem \ref{tauG_elim}, studies the behavior of Hironaka's $\tau$-invariant with the form of elimination mentioned above.

The $\tau$-invariant appears in \cite{Hir64}, and it is defined by geometrical conditions: given a hypersurface $X$ and a point closed $x\in X$, the $\tau$-invariant is the codimension of a linear subspace attached to the tangent cone of $X$ at $x$ (see \ref{vertices}). For an algebraic point of view, $\tau$ is   the minimum number of variables needed to express generators of $\mathbb{I}_X$, where $\mathbb{I}_X$ denotes the homogeneous ideal spanned by initial forms of $X$ at $x$. 

These an other aspects of $\tau$-invariant  are studied in works of Oda (\cite{Oda1987}, \cite{Oda1973} or \cite{Oda1983}), see also \cite{Hironaka70} and \cite{Kaw}.

The first sections of this paper presents the basic definitions of Rees algebras and its relationship with integral closure and differential operators (for more details see \cite{EncVil06} and \cite{VV1}); Elimination algebras are described in Section \ref{sect:elim:alg}; generic projections and formal definitions of tangent cones and $\tau$-invariant are introduced in Section \ref{sect:vertices}.

In Section \ref{sect:tau:int:clo} we prove that two integrally equivalent Rees algebras (i.e. with the same integral closure) have the same $\tau$-invariant. 

In Section \ref{sect:tau:elim} we show the main result of this paper, i.e. given a differential Rees algebra so that  $\tau_{\G,x}\geq 1$ locally at a closed point $x$ and if we are under conditions to consider its elimination algebra, say $\R_{\G}$, then the $\tau$-invariant drops by one, i.e. $\tau_{\R_{\G}}=\tau_{\G}-1$.

In \cite{VV4}, Proposition 5.12. it is proven that given a differential Rees algebra $\G$ so that  $\tau_{\G,x}>1$, then its elimination algebra, say $\R_{\G}$, is such that $\tau_{\R_\G}\geq 0$. Here, we give a more general result and generalized to any arbitrary characteristic $p\geq0$ a well-known property  in characteristic $0$.

\vspace{0.2cm}


\section{Rees algebras}
\begin{parrafo}
We start introducing our main tool, Rees algebras, and disscusing some nice properties which will be useful throughout this paper.

\end{parrafo}

\begin{definition} A \emph{Rees algebra} over a smooth scheme $V$ is  a locally finite generated subalgebra of $\calo_V[W]$, of the form
$$\mathcal{G}=\bigoplus_{k\geq 0}I_kW^k,$$
where $\{I_k\}_{k\geq0}$ is a sequence of sheafs of ideals, and $\G$ is locally a finitely generated $\calo_{V}$-algebra so that
\begin{enumerate}
\item $I_0=\calo_V$,
\item each $I_k$ is  such that $I_sI_t\subset I_{s+t} $ for $s,t\geq 0$.
\end{enumerate}
\end{definition}

 \begin{remark}
At any affine open subset $U(\subset V)$, there exists a finite set of elements
$$\mathcal{F}=\{f_{1}W^{n_1},\dots ,f_{s}W^{n_s}\},$$
 with $n_i\in\mathds{Z}_{\geq 1}$ and
$f_i\in \calo_V(U)$, so that the restriction to $U$ is of the form
$$\mathcal{G}(U)=\calo_V(U)[f_{1}W^{n_1},\dots ,f_{s}W^{n_s}] (\subset
\calo_V(U)[W]).$$
We say that $\mathcal{F}=\{f_{1}W^{n_1},\dots ,f_{s}W^{n_s}\}$ is a \emph{set of generators} of $\G$ locally at $U$. Any other element of $fW^n\in\G$ can be express by a weighted homogeneous polynomial of degree $n$ and coefficients in $\calo_{V}(U)$, say $F_n(Y_1,\dots,Y_n)$, where each variable $Y_j$ has weight $n_j$ and $f=F_n(f_{1},\dots,f_{s})$.
\end{remark}

\begin{remark}\label{rkK1} Rees algebras and Rees rings are closely related. We define a \emph{Rees ring} as a Rees algebra if in any affine open subset we can choose a set of generators of $\G$, say $\mathcal{F}=\{f_1W^{n_1},\dots ,f_sW^{n_s}\}$, such that all the weights $n_i$ are  $n_i=1$.
 
Rees algebras are integral closures of Rees rings: if $N$ is a positive integer divisible by
all the weights $n_i$, then
$$\calo_V(U)[f_1W^{n_1},\dots
,f_sW^{n_s}]=\bigoplus_{k\geq 0}I_kW^k\ (\subset \calo_V(U)[W]),$$ 
is integral over the Rees sub-ring $\calo_V(U)[I_NW^N](\subset
\calo_V(U)[W^N])$.
\end{remark}

\begin{definition}
Given two Rees algebras $\G_1=\bigoplus_{n\geq 0}I_n^{(1)}W^n$ and $\G_2=\bigoplus_{n\geq 0}I_n^{(2)}W^n$ we define a binary operation between them denoted by $\odot$.
 In any affine open subset $U$, if locally $\G_1=\calo_{V}(U)[f_1W^{n_1},\dots,f_rW^{n_r}]$ and $\G_2=\calo_{V}(U)[g_1W^{m_1},\dots,g_sW^{m_s}]$, then
$$\G_1\odot\G_2=\calo_{V}(U)[f_1W^{n_1},\dots,f_rW^{n_r},g_1W^{m_1},\dots,g_sW^{m_s}].$$
This algebra $\G_1\odot\G_2$ is the smallest algebra containing $\G_1$ and $\G_2$.
\end{definition}

\begin{definition}
A closed set is attached  to $\mathcal{G}$, called the \emph{singular locus of} $\G$,
$$\Sing(\mathcal{G}):=\{ x\in V\ |\ \nu_x(I_k)\geq k, \mbox{ for each }
k\geq 1\},$$ where $\nu_x(I_k)$ denotes the order of the ideal
$I_k$ at the local regular ring $\calo_{V,x}$.
\end{definition}

In the following Proposition we present an easier form to compute the singular locus in terms of the generators of $\G$.

\begin{proposition}\label{prop1} Given an affine open $U\subset V$, and
$\mathcal{F}=\{f_1W^{n_1},\dots ,f_sW^{n_s}\}$ the set of generators  as above, then
$$\Sing(\mathcal{G})\cap U= \bigcap_{1\leq i \leq s}\{ x\in U\ |\ \ord_x (f_i) \geq n_i\}.$$

\end{proposition}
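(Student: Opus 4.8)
The plan is to prove the two inclusions separately, working locally on the affine open $U$ and using the definition of $\Sing(\G)$ together with the description of an arbitrary element $fW^n \in \G$ given in the second remark.

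First I would prove the inclusion $\Sing(\G)\cap U \subseteq \bigcap_i \{x\in U \mid \ord_x(f_i)\geq n_i\}$. This direction is immediate: each generator $f_iW^{n_i}$ lies in $\G$, so $f_i \in I_{n_i}$, and if $x \in \Sing(\G)$ then by definition $\nu_x(I_{n_i}) \geq n_i$, hence $\ord_x(f_i) \geq \nu_x(I_{n_i}) \geq n_i$. Thus $x$ belongs to each set on the right-hand side.

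For the reverse inclusion, suppose $x \in U$ satisfies $\ord_x(f_i) \geq n_i$ for all $i$; I must show $\nu_x(I_k) \geq k$ for every $k \geq 1$. Fix $k$ and take an arbitrary element $g \in I_k$, so that $gW^k \in \G$. By the remark on generators, $g = F_k(f_1,\dots,f_s)$ where $F_k$ is a weighted homogeneous polynomial of degree $k$ in variables $Y_j$ of weight $n_j$; that is, $g$ is an $\calo_V(U)$-linear combination of monomials $f_1^{a_1}\cdots f_s^{a_s}$ with $\sum_j a_j n_j = k$. Since the order function $\ord_x$ on the regular local ring $\calo_{V,x}$ is a valuation-like function — it is superadditive on products, $\ord_x(ab) \geq \ord_x(a)+\ord_x(b)$ (in fact additive, but superadditivity suffices), and satisfies $\ord_x(a+b)\geq \min\{\ord_x(a),\ord_x(b)\}$, and $\ord_x(c)\geq 0$ for any $c\in\calo_V(U)$ — each such monomial has order at least $\sum_j a_j\, \ord_x(f_i) \geq \sum_j a_j n_j = k$. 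Taking the minimum over the monomials appearing, $\ord_x(g) \geq k$. As $g \in I_k$ was arbitrary, $\nu_x(I_k) \geq k$, and since $k$ was arbitrary, $x \in \Sing(\G)$.

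The only point requiring a little care — and the closest thing to an obstacle — is the bookkeeping in the last step: one must invoke that every element of $I_k$ is a weighted-homogeneous-degree-$k$ polynomial expression in the generators, which is exactly the content of the earlier remark, and then apply the elementary properties of $\ord_x$ on a regular local ring uniformly to such expressions. There is no genuine difficulty; the statement is essentially a translation of the definition of $\Sing(\G)$ into generator language, and the proof is a short verification.
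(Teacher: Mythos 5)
Your proof is correct: both inclusions are verified properly, the forward one being immediate from $f_i\in I_{n_i}$, and the reverse one using that every element of $I_k$ is a weighted homogeneous expression of degree $k$ in the generators together with the standard properties of $\ord_x$ on the regular local ring $\calo_{V,x}$. The paper itself offers no argument for this proposition (it only cites \cite{VV1}, Proposition 4.4(2)), and your direct verification is precisely the standard argument given in that reference, so you have in effect supplied the proof the paper omits.
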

\begin{proof}
 See \cite{VV1}, Proposition 4.4 (2).
 \end{proof}

\begin{parrafo}

We now introduce an equivalence relation between Rees algebras. This notion is closely related with the equivalence relation Hironaka defined in the ambient of couples $(J,b)$, where $J\subset\calo_{V}$ is a sheaf of ideals and $b$ is a positive integer. Here the pairs $(J,b)$ and $(J',b')$ are equivalent, say $(J,b)\sim (J',b')$, if and only if $J^{b'}$ and $(J')^{b}$ have the same integral closure.

\begin{definition}\label{equiv_Rees} Given two Rees algebras over $V$,  $\G_1$ and $\G_2$ we say that 
they are {\em integrally
equivalent} and denote it by $\G_1\sim\G_2$, if $\G_1$ and $\G_2$ have the same integral closure.
\end{definition}

\begin{proposition} The singular locus is compatible with integral closure. In other words: Let $\mathcal{G}_1$ and $\mathcal{G}_2$ be two integrally equivalent Rees algebras over $V$. Then:
$$\Sing(\mathcal{G}_1)=\Sing(\mathcal{G}_2).$$
\end{proposition}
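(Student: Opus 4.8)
The plan is to reduce to a local, single-algebra statement and then exploit that the order function is a valuation. Since $\Sing$ is defined pointwise, I would fix a closed point $x\in V$ and work in the regular local ring $\calo:=\calo_{V,x}$. By hypothesis $\mathcal{G}_1$ and $\mathcal{G}_2$ have a common integral closure $\overline{\mathcal{G}}$ inside $\calo_V[W]$, so it suffices to prove $\Sing(\mathcal{G})=\Sing(\overline{\mathcal{G}})$ for an arbitrary Rees algebra $\mathcal{G}=\bigoplus_k I_kW^k$; writing $\overline{\mathcal{G}}=\bigoplus_k \overline{I}_kW^k$ for its homogeneous decomposition, one has $I_k\subseteq\overline{I}_k$. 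The inclusion $\Sing(\overline{\mathcal{G}})\subseteq\Sing(\mathcal{G})$ is then immediate, since $\nu_x(\overline{I}_k)\geq k$ forces $\nu_x(I_k)\geq k$ because $I_k\subseteq\overline{I}_k$.

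For the opposite inclusion I would use that $\nu_x=\ord_x$ is a valuation on $\calo$ --- additive on products and ultrametric on sums --- since the associated graded ring of $\calo$ along its maximal ideal is a polynomial ring over the residue field, hence a domain. Take $x\in\Sing(\mathcal{G})$, so $\nu_x(I_k)\geq k$ for all $k\geq 1$, and pick $fW^k\in\overline{\mathcal{G}}$. It satisfies a monic equation over $\mathcal{G}$; isolating the homogeneous component of $W$-degree $mk$ (where $m$ is the degree of the equation) produces a relation $f^m+c_1f^{m-1}+\cdots+c_m=0$ with $c_j\in I_{jk}$, so $\nu_x(c_j)\geq jk$. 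A term-by-term order comparison then forces $\nu_x(f)\geq k$: otherwise each $c_jf^{m-j}$ would have strictly larger order than $f^m$, contradicting the relation. Hence $\nu_x(\overline{I}_k)\geq k$ for every $k$, i.e. $x\in\Sing(\overline{\mathcal{G}})$, and combining both inclusions gives $\Sing(\mathcal{G})=\Sing(\overline{\mathcal{G}})$; applying this to $\mathcal{G}_1$ and $\mathcal{G}_2$ with their common integral closure finishes the argument.

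The only step requiring genuine care is this opposite inclusion, and the two things I expect to be delicate are: passing cleanly to homogeneous components so that the integral dependence relation genuinely lives in a single $W$-degree, and knowing that $\ord_x$ really behaves as a valuation so that the term-by-term comparison is legitimate. I would argue directly with this valuation rather than trying to quote the classical identity $\nu_x(J)=\nu_x(\overline{J})$ for a single ideal, since the degree-$k$ piece of $\overline{\mathcal{G}}$ need not be simply the integral closure of the ideal $I_k$ taken in isolation. Alternatively one could try to combine Proposition \ref{prop1} with an explicit description of a set of generators of $\overline{\mathcal{G}}$, but that route seems more laborious.
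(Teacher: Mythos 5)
Your argument is correct. Note, though, that the paper does not prove this proposition at all: it simply cites \cite{VV1}, Proposition 4.4 (1), so you have supplied a self-contained proof where the paper defers to the literature. The cited proof (and the strategy this paper itself follows later, in Section \ref{sect:tau:int:clo}, for the analogous statement about $\tau$) typically reduces to a single degree: one passes to a Rees ring $\calo_V[I_NW^N]$ for $N$ divisible by all the weights and uses compatibility of the order function with integral closure of the single ideal $I_N$. You instead work degree by degree with homogeneous equations of integral dependence and the valuation property of $\ord_x$ on the regular local ring, which is more direct and avoids the reduction to one $N$; your caution that the degree-$k$ piece of $\overline{\G}$ need not be the integral closure of $I_k$ as an ideal is well placed, and your direct valuation argument correctly sidesteps that issue. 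Two small points you should make explicit. First, the decomposition $\overline{\G}=\bigoplus_k \overline{I}_kW^k$ uses that the integral closure of a graded subring of $\calo_V[W]$ is again graded; this is standard, and in fact your extraction of the $W$-degree $mk$ component of the integral equation is precisely the proof of it --- alternatively you can avoid invoking it altogether by proving only the inclusion $\Sing(\mathcal{G}_1)\subseteq\Sing(\mathcal{G}_2)$: every homogeneous element $fW^k$ of $\mathcal{G}_2$ is integral over $\mathcal{G}_1$ (they have the same closure), and your component-extraction plus order comparison applies verbatim, with equality then following by symmetry. Second, in the order comparison one should separate the trivial case $f=0$ and discard the terms with $c_j=0$; with that said, the strict inequality $\nu_x(c_jf^{m-j})\geq jk+(m-j)\nu_x(f)>m\nu_x(f)$ for $j\geq 1$ under the assumption $\nu_x(f)<k$ does yield the contradiction you describe, since $\ord_x$ is a valuation because the associated graded ring of $\calo_{V,x}$ is a polynomial ring, hence a domain.
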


\begin{proof}
See \cite{VV1} Proposition 4.4 (1).
\end{proof}

If $\mathcal{G}_1$ and $\mathcal{G}_2$ are  {\em integrally}
equivalent on $V$, the same holds for any open restriction, and
also for pull-backs by smooth morphisms $W\to V$.
\end{parrafo}

\begin{parrafo}\label{introdiff}{\bf Differential Rees algebras}.  Here we  introduce a particular class of Rees algebras, called the differential Rees algebras, and discuss  some significant properties. Some of these properties of differential Rees algebras have a very important meaning in the context of singularities as we will see.

Given a smooth scheme $V$ over a field $k$, there is a locally free sheaf over $V$, defined for each non-negative integer $s$, called the sheaf of differential operators of order $s$, and denoted by $Diff^s_k$, this sheaf is so that
\begin{enumerate}
\item for $s=0$, $Diff^0_k=\calo_V$, and 
\item for each $s\geq 0$ $Diff^s_k\subset Diff^{s+1}_k$.
\end{enumerate}

Given a sheaf of ideals $J\subset\calo_{V}$, we can define an \emph{extension of the sheaf of ideals} with the help of this differential operator. This extension of $J$, denoted by
$Diff^s_k(J)$, is so that over any affine open set $U$,
 $$Diff^s_k(J)(U)=\{D(f)\ |\ D\in Diff^s_k(U)\hbox{ and }f\in J(U)\},$$
 that is $Diff^s_k(J)(U)$ is obtained by adding to $J(U)$ the elements $D(f)$ for every $D\in Diff_k^s(U)$ and $f\in J(U)$.

Under this assumptions, it is easy to check that
\begin{enumerate}
\item $Diff_k^0(J)=J$, and 
\item For any $s\in\mathbb{Z}_{\geq 0}$, we have the following inclusion of sheaves of ideals in $\calo_V$, $$Diff_k^s(J)\subset Diff_k^{s+1}(J).$$
\end{enumerate}

Then, the definition of differential Rees algebras arises in a natural way, only by keeping track of the weight.

\begin{definition} \label{3def1} Given a smooth scheme $V$, we say that a Rees algebra $\G=\bigoplus_{n\geq 0} I_nW^n$,  is an \emph{absolute differential algebra} or simply a \emph{differential algebra}, if: 
\begin{itemize}
\item[(i)] for $n\geq 0$, $I_{n+1}\subset I_{n}$. 
\item[(ii)] For a suitable affine open covering of $V$, say$\{U_i\}$, and for every differential operator $D\in Diff_k^{r}(U_i)$, and $h\in I_n(U_i)$, then
 $$D(h)\in I_{n-r}(U_i) \hbox{ if } n\geq r.$$
 \end{itemize}

Condition (ii) can be stated as: 
\begin{itemize}
\item[(ii')] For each $n$ and each $0\leq r\leq n$, $Diff_k^{r}(I_n)\subset I_{n-r}$..
\end{itemize}
\end{definition}

\begin{parrafo}\label{ffbrd}{\bf Local description of differential operators.} Let  $V$ be a smooth scheme of dimension $n$ over a field $k$.  Given a closed point $x\in V$, consider a regular system of parameters
$\{x_1,\dots,x_n\}$ at  $\calo_{V,x}$. If $k'$ is the residue field (a finite extension of $k$),  the completion is defined by
$\widehat{\calo}_{V,x}=k'[[x_1,\dots , x_n]].$

At the completion level, we can define the Taylor morphism, a continuous $k'$-linear ring
homomorphism defined as:
$$
\xymatrix@R=0pc @C=1.5pc{
Tay:   k'[[x_1,\dots , x_n]]\ar[r]   & k'[[x_1,\dots , x_n,T_1,\dots ,
T_n]]\\
\qquad \ \ \  x_i \ar@{|->}[r]   & x_i+T_i
}
$$

For any
$f\in k'[[x_1,\dots , x_n]]$, 
$$Tay(f)=\sum_{\alpha \in
\mathbb{N}^n} g_{\alpha}T^{\alpha}, \hbox{ with } g_{\alpha} \in
k'[[x_1,\dots , x_n]].$$ 
Here, differential operators appear in the ambit of formal power series: Define, for each $\alpha \in
\mathbb{N}^n$, $\Delta^{\alpha}(f)=g_{\alpha}$. This $\Delta^\alpha$ are defined on the ring of formal power series, but considering the natural
inclusion of $\calo_{V,x}$ in its completion, it can be proved that 
$\Delta^{\alpha}(\calo_{V,x})\subset \calo_{V,x},$. Moreover the set 
$$\{ \Delta^{\alpha} \ |\  \alpha \in \nat^n ,\ 0\leq |\alpha| \leq r\}$$
 generates  $Diff^r_k$ locally at $x$ (for more details of these facts, see \cite{Gr} Theorem 16.11.2).

In the following Theorem it is formulated how differential Rees algebras are related to general Rees algebras.
\end{parrafo}

\begin{theorem}\label{thopG} Let $V$ be a smooth scheme. Given a Rees algebra $\mathcal{G}$ over 
$V$, there exists a differential Rees algebra denoted by $G(\mathcal{G})$, such that:
\begin{itemize}
\item[(i)] $\mathcal{G}\subset G(\mathcal{G})$.

\item[(ii)] If $\mathcal{G}\subset \widehat{\G}$ and $\widehat{\G}$ is a
differential algebra, then $G(\mathcal{G})\subset \widehat{\G}$.
\end{itemize}

Moreover, locally at $x\in V$,  a closed point, if
$\mathcal{F}=\{
f_{1}W^{n_1},\dots, f_{s}W^{n_s}\},$ is a local set of generators of $\G$, then
\begin{equation} 
\mathcal{F'}=\{\Delta^{\alpha}(f_{i})W^{n'_i-\alpha}\ |\
f_{i}W^{n_i}\in \mathcal{F}, \alpha \in \mathbb{N}^n,\ 0\leq |\alpha| < n'_i
\leq n_i, \hbox{ for }1\leq i\leq s\}
\end{equation}
is a set of generators of $G(\mathcal{G})$ locally at $x$.
\end{theorem}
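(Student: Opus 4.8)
The plan is to establish the three assertions at once, by constructing $G(\G)$ explicitly and checking its universal property; the construction is local, so I fix a closed point $x\in V$ and an affine open $U\ni x$ small enough to carry a regular system of parameters, so that the operators $\Delta^{\beta}$ of \ref{ffbrd} are available on $U$, and set $\G':=\calo_V(U)[\mathcal{F}']$. First I would verify that $\G'$ is a Rees algebra: it is a finitely generated $\calo_V(U)$-subalgebra of $\calo_V(U)[W]$ (only finitely many $\alpha$ occur in $\mathcal{F}'$), its degree-zero part is $\calo_V(U)$, and products of its homogeneous pieces of degrees $s$ and $t$ lie in degree $s+t$. Writing $\G'=\bigoplus_n I'_nW^n$, I note that $\mathcal{F}'$ contains $f_iW^{n'_i}$ for every $n'_i$ with $1\le n'_i\le n_i$ (the case $\alpha=0$), and deduce $I'_{n+1}\subset I'_n$ for all $n$ by a direct argument: in any monomial in the generators of weight $n+1$, one factor, of weight $w\ge 1$, can be rewritten with weight $w-1$ (it is then another element of $\mathcal{F}'$, or, when $w=1$, an element of $\calo_V(U)=I'_0$), which moves the monomial into $I'_n$. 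This is condition (i) of Definition \ref{3def1}. Since also $f_iW^{n_i}\in\mathcal{F}'$ we have $\G\subset\G'$, which is part (i) of the theorem.

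The core step is condition (ii') for $\G'$. I claim it suffices to prove $\Delta^{\beta}(I'_n)\subset I'_{n-|\beta|}$ for every multi-index $\beta$ and every $n\ge|\beta|$; granting this, since the $\Delta^{\beta}$ with $|\beta|\le r$ generate $Diff^r_k$ locally at $x$ by \ref{ffbrd} and $I'_{n-|\beta|}\subset I'_{n-r}$ for $|\beta|\le r$ by condition (i), we obtain $Diff^r_k(I'_n)\subset I'_{n-r}$. For the claim I would invoke two identities coming from iterating the Taylor morphism: the Leibniz rule $\Delta^{\beta}(gh)=\sum_{\beta_1+\beta_2=\beta}\Delta^{\beta_1}(g)\,\Delta^{\beta_2}(h)$, valid because $Tay$ is a ring homomorphism, and the composition rule stating that $\Delta^{\beta}(\Delta^{\alpha}(h))$ is an integer multiple of $\Delta^{\alpha+\beta}(h)$, obtained by expanding $h$ around $x+T+S$ in the two orders. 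Now a homogeneous element of $I'_n$ is a sum of terms $a\cdot g_1\cdots g_t$ with $a\in\calo_V(U)$, each $g_jW^{w_j}\in\mathcal{F}'$, and $w_1+\dots+w_t\ge n$; applying $\Delta^{\beta}$ and distributing via the Leibniz rule (the derivatives of $a$ staying in $\calo_V(U)$) reduces the claim to the single-generator case. For a generator $g=\Delta^{\alpha}(f_i)$, of weight $w:=n'_i-|\alpha|$, the composition rule gives $\Delta^{\beta}(g)\in\calo_V(U)\cdot\Delta^{\alpha+\beta}(f_i)$; if $|\beta|<w$ then $|\alpha+\beta|<n'_i\le n_i$, so $\Delta^{\alpha+\beta}(f_i)W^{w-|\beta|}\in\mathcal{F}'$ and $\Delta^{\beta}(g)\in I'_{w-|\beta|}$, while if $|\beta|\ge w$ then $\Delta^{\beta}(g)\in\calo_V(U)=I'_0$. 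Feeding this back into the Leibniz expansion and keeping track of weights — a typical term lies in $I'_m$ with $m\ge\sum_j(w_j-|\beta_j|)\ge n-|\beta|$, hence in $I'_{n-|\beta|}$ by condition (i) — gives $\Delta^{\beta}(I'_n)\subset I'_{n-|\beta|}$, i.e. condition (ii').

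For minimality, let $\widehat{\G}=\bigoplus_n\widehat{I}_nW^n$ be any differential Rees algebra with $\G\subset\widehat{\G}$. Then $f_i\in\widehat{I}_{n_i}\subset\widehat{I}_{n'_i}$ for $n'_i\le n_i$ by condition (i), and since $\Delta^{\alpha}\in Diff^{|\alpha|}_k$, condition (ii') yields $\Delta^{\alpha}(f_i)\in\widehat{I}_{n'_i-|\alpha|}$ whenever $|\alpha|<n'_i$; thus every generator in $\mathcal{F}'$ lies in $\widehat{\G}$, so $\G'\subset\widehat{\G}$. This is part (ii) of the theorem, and it also shows that $\G'$ is intrinsically the smallest differential Rees algebra containing $\G$ near $x$ — in particular independent of the chosen set of generators and of the coordinates — so the local algebras agree on overlaps and glue to a global differential Rees algebra $G(\G)$ with the stated properties and local description. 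I expect the main obstacle to be the second paragraph: establishing the two Taylor identities precisely and, above all, doing the weight bookkeeping carefully enough that the double reduction (from an arbitrary element of $\G'$, to a product of generators, to a single generator) genuinely respects the filtration $\{I'_n\}_n$.
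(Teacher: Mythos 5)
Your proposal is correct; the only points needing care (the composition identity $\Delta^{\beta}\circ\Delta^{\alpha}=\binom{\alpha+\beta}{\alpha}\Delta^{\alpha+\beta}$, which is harmless in positive characteristic since you only need membership in the module generated by $\Delta^{\alpha+\beta}(f_i)$, and the coordinate-independence needed to glue) are handled by your appeal to the universal property. Note that the paper itself gives no argument here — its proof is just the citation to \cite{VV1}, Theorem 3.4 — and your self-contained construction (adjoin the $\Delta^{\alpha}(f_i)W^{n_i'-|\alpha|}$, verify differential closure via the Leibniz and composition rules with the weight bookkeeping, then deduce minimality and glue) is essentially the standard proof given in that reference.
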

\begin{proof} See \cite{VV1}, Theorem 3.4.
\end{proof}

\begin{remark}
The previous Theorem shows that there exists a smallest differential Rees algebra containing $\G$, it is denoted by $G(\G)$.
\end{remark}

\begin{remark}\label{rk33} By the local description of $\G$ and $G(\G)$, it is easy to check that
$$\Sing(\mathcal{G})=\Sing(G(\mathcal{G})).$$
\end{remark}
\end{parrafo}

\begin{parrafo}{\bf Relative differential algebras}. 
Let $V\overset{\phi}{\longrightarrow} V'$ be a smooth morphism of smooth schemes.
Denote by $Diff^r_{\phi}(V)$ the locally free sheaf of relative differential operators of order $r$.

\begin{definition} \label{3def1} A Rees algebra $\G=\bigoplus I_k W^k$
 over $V$  is a $\phi$-\emph{relative differential algebra}, if 
\begin{itemize}
\item[(i)] for $n\geq 0$, $I_{n+1}\subset I_{n}$. 
\item[(ii)] For a suitable affine open covering of  $V$, say $\{U_i\}$, and for every relative differential operator $D\in Diff_{\phi}^{r}(U_i)$, and $h\in I_n(U_i)$, 
 $$D(h)\in I_{n-r}(U_i) \hbox{ if } n\geq r.$$
 \end{itemize}

As in Definition \ref{3def1}, condition (ii) can be reformulated by: 
\begin{itemize}
\item[(ii')]  For each $n$, and $0\leq r
\leq n$, $Diff_\phi^{r}(I_n)\subset I_{n-r}$.
\end{itemize}
\end{definition}

\begin{remark}
Since $Diff^r_{\phi}(V)\subset Diff^r_{k}(V)$, then any differential algebra is also a 
$\phi$-relative differential algerba.
\end{remark}

\begin{remark}As in Theorem \ref{thopG}, any Rees algebra $\G$ can be extended to a smallest $\phi$-relative differential algebra. Given an ideal $J\subset \calo_V$ and a smooth morphism $V\overset{\phi}{\longrightarrow}
V'$ we can considered the natural  extension of ideals, say $J \subset Diff^r_{\phi}(J)$, defined for each open subset $U$ in $V$ as
$$Diff^r_{\phi}(J)(U)=\{ D(f)\ |\ f\in J(U) , D \in
Diff^r_{\phi}(U) \} $$ 
\end{remark}

Finally, a Rees algebra $\G=\bigoplus I_k W^k$
 is a $\phi$-relative differential algebra, if
 and only if $Diff^r_{\phi}(I_n)\subset I_{n-r},\hbox{  for any positive integers }r\leq n.$
\end{parrafo}

\section{Elimination algebras}\label{sect:elim:alg}

\begin{parrafo}\label{univ_elimalgebra}{\bf Universal elimination algebra.} Let $S$ be a ring. Consider the polynomial ring $S[Z]$ and a monic polynomial $f(Z)\in S[Z]$ such that $f(Z)= Z^n+a_1Z^{n-1}+\cdots+a_n$. As  in \cite{VV4}, elimination algebras arise quite naturally when we search for equations on the coefficients which are invariant by all change of variables, namely those of the form $Z\mapsto uZ+s$, where $\alpha\in S $ and $u\in U(S)$.  For this reason we discuss some aspects of invariant and elimination theory, and obtain some results in a universal way.

Let define $F_n(Z)=(Z-Y_1)(Z-Y_2)\dots(Z-Y_n)$ as the {\em universal monic polynomial of degree} $n$ in the polynomial ring of $n+1$ variables $k[Y_1,\dots,Y_n,Z]$. The group of permutations of $n$ elements, $\S_n$, acts on $k[Y_1,\dots,Y_n]$ by permuting the index of the variables $Y_1,\dots,Y_n$; and this action extends to $k[Y_1,\dots,Y_n,Z]$ by fixing $Z$.

The subring of invariants, say $k[Y_1,\dots,Y_n]^{\S_n}$, is generated, as a $k$-algebra, by the symmetric elemental functions of order $i$, $s_{n,i}$, for $1\leq i \leq n$:
\begin{align*}
&s_{n,1}= Y_1+\dots+Y_n\\
&s_{n,2}=\displaystyle\sum_{1\leq i<j\leq n} Y_iY_j\\
&\quad\vdots\\
&s_{n,n}=Y_1Y_2\dots Y_n
\end{align*}
That is, $k[Y_1,\dots,Y_n]^{\S_n}=k[s_{n,1},\dots,s_{n,n}]$ and
$$k[Y_1,\dots,Y_n,Z]^{\S_n}=k[s_{n,1},\dots,s_{n,n}][Z].$$

It is easy to check that the monic polynomial 
$$F_n(Z)=(Z-Y_1)\dots(Z-Y_n) \in k[s_{n,1},\dots,s_{n,n}][Z],$$ 
since this polynomial is invariant under the action of $\S_n$.  

Let $S$ be a $k$-algebra and fix $f(Z)= Z^n+a_1Z^{n-1}+\cdots + a_n$, a monic polynomial of degree $n$ in $S[Z]$.  This polynomial arises from the universal polynomial $F_n(Z)$ by the base change morphism 
$$
\xymatrix@R=0pc @C=1.5pc{k[s_{n,1},\dots,s_{n,n}] \ar[r] & S\\
s_{n,i} \ar@{|->}[r] &  (-1)^{i}\cdot a_i
}
$$
which induces a morphism
$$
k[s_{n,1},\dots,s_{n,n}][Z]\longrightarrow S[Z]
$$
that maps $F_n(Z)$ to $f(Z)$.

So results in  the  universal setting will provide results for the fixed polynomial $f(Z)$.

The group $\S_n$ acts linearly over the polynomial ring 
$k[Y_1,\dots,Y_n,Z]$ and this action preserves the grading of the ring. So the invariant subring given by $k[s_{n,1},\dots,s_{n,n}][Z]$ can be considered as a graded sub-ring (with the grading inherited from that of $k[Y_1,\dots,Y_n,Z]$). The group  $\S_n$ also acts linearly in the graded sub-ring $k[Y_i-Y_j]_{1\leq i,j,\leq n}\subset k[Y_1,\dots,Y_n]$ defining an inclusion of graded sub-rings 
$$k[Y_i-Y_j]^{\S_n}_{1\leq i,j,\leq n}\subset k[Y_1,\dots,Y_n]^{\S_n}.$$ 
Set $k[Y_i-Y_j]^{\S_n}=k[H_{n_1},\dots,H_{n_r}]$, where the $H_{n_i}$ are homogeneous polynomial of degree $n_i$, $1\leq i\leq r$. Note that $H_i$ is also weighted homogeneous of degree $n_i$ in $k[s_{n,1},\dots,s_{n,n}]$, where each $s_i$ has degree $i$, that is,
$$H_{n_i}=H_{n_i}(s_{n,1},\dots,s_{n,n}).$$

The graded algebra  $k[Y_i-Y_j]^{\S_n}$ will be called the \emph{universal elimination algebra}, any polynomial in this ring provides, for each
$f(Z)= Z^n+a_1Z^{n-1}+\cdots + a_n$ in $S[Z]$, and each base change as above, a function on the coefficients $a_i$ which is invariant by changes of variable of the form $Z \mapsto Z-s$, $s\in S$. In other words, we have obtain functions in the coefficients
$$h_{n_i}(a_1,\dots,a_{n})$$
which are invariants by changes of the form $Z \mapsto Z-s$.

Note that if we consider now more general changes of variables, namely these of the form $Z\mapsto uZ-s$, $s\in S$ and $u\in U(S)$, we obtain that the previous functions are of the form
$$u^{n_i}h_{n_i}(a_1,\dots,a_{n}).$$

Nevertheless, we are considering graded algebras, and in particular, for every weight $n\in\mathbb{Z}_{\geq 0}$ we will consider an ideal $J_n$ generated by weighted homogeneous polynomials $h$ of degree $n$ and coefficients in $S$, say $H_n(V_1,\dots,V_n)$, where each variable $V_j$ has weight $n_j$ and $h=H_n(h_{n_1},\dots,h_{n_r})$. So, every ideal $J_n$ of the elimination algebra is invariant by changes of the form $Z\mapsto uZ+s$ even if the functions $h_{n_i}$ are not invariant.
It is enough to consider the particular case of changes of the form $Z\mapsto Z+s$.

\end{parrafo}
\begin{parrafo}\label{tayloruniv}{\bf The Taylor morphism in the universal setting.}
A morphism $Tay$ is defined as in (\ref{ffbrd}). Let $S$ be a $k$-algebra and consider the $S$-homomorphism
$$
\xymatrix@R=0pc @C=1.5pc{
Tay:   S[Z]\ar[r]   & S[Z,T]\\
\ \ \ \  Z \ar@{|->}[r]   & Z+T
}
$$
For $f(Z)\in S[Z]$, we have 
$$Tay(f(Z))=\sum_{\alpha\in\mathbb N}g_\alpha(Z)T^\alpha,$$
with $g_\alpha(Z)\in S[Z]$ and finally define, for each $\alpha\in\mathbb N$, $\Delta^{(\alpha)}(f(Z))=g_\alpha(Z)$. 
\end{parrafo}

\begin{remark}\label{invasdif}
Since $F_n(Z)=(Z-Y_1)\cdot (Z-Y_2)\cdots (Z-Y_n)\in k[Y_1,\dots,Y_n][Z]$, then 
$$F_n(T+Z)=(T+(Z-Y_1))\cdot (T+(Z-Y_2))\cdots (T+(Z-Y_n)).$$
The coefficients of this polynomial in the variable $T$ are the symmetric polynomials evaluated on
the element $(Z-Y_1,\dots,Z-Y_n)$. So
\begin{equation}\label{eqinvasdif}
\Delta^{(e)}(F_n(Z))=(-1)^{n-e}s_{n,n-e}(Z-Y_1, Z-Y_2,\dots , Z-Y_n).
\end{equation}
Here  $ \mathbb{S}_n$ acts on the graded sub-ring $k[Z-Y_1,\dots Z-Y_n](\subset k[Y_1, \dots, Y_n,Z])$ setting $\sigma(Z)=Z$ for every $\sigma\in\S_n$ and preserving the graded structure. Note that
$$k[Z-Y_1,\dots , Z-Y_n]^{ \mathbb{S}_n}=k[F_n(Z),\Delta^{(e)}(F_n(Z))]_{ e=1,\dots, n-1},$$
and that each $\Delta^{(e)}(F_n((Z))$ is homogeneous of degree $n-e$.\end{remark}

As $Y_i-Y_j=(Z-Y_j)-(Z-Y_i)$ we deduce, from the inclusion 
$$k[Y_i-Y_j]_{1\leq i,j,\leq n} \subset k[Z-Y_1,\dots Z-Y_n],$$
an inclusion of graded sub-rings 
\begin{equation}\label{elimuniv}
k[H_{n_1},\dots,H_{n_r}]=k[Y_i-Y_j]^{\S_n}\subset k[F_n(Z),\Delta^{(e)}(F_n(Z))]_{ e=1,\dots, n-1},
\end{equation}
so now, each $H_{n_i}$ is weighted homogeneous in $k[F_n(Z),\Delta^{(e)}(F_n(Z))]_{e=1,\dots, n-1}$.
\begin{parrafo}\label{specelimalgebra}{\bf Specialization of the elimination algebra.}
We will now assign, to the monic polynomial $f(Z)= Z^n+a_1Z^{n-1}+\cdots+a_n$ of degree $n$ in $S[Z]$, a Rees algebra which is a subalgebra of the ring $S[Z][W]$ (i.e. finitely generated sub-algebra of $S[Z][W]$ for a dummy variable $W$). 

To be precise, we attach to a graded subring in $k[s_{n,1},\dots,s_{n,n}][Z]$ a subring in $S[Z][W]$, so that whenever $H$ is a weighted homogeneous polynomial of degree, say $m$, in $k[s_{n,1},\dots,s_{n,n}][Z]$, we assign to it an element of the form $hW^m$, with $h\in S[Z]$.

Given $f(Z)= Z^n+a_1Z^{n-1}+\cdots+a_n$  in $S[Z]$ we define a $k$-algebra homomorphism on $S[Z][W]$ by setting
$$
\xymatrix@R=0pc @C=1.5pc{k[s_{n,1},\dots,s_{n,n}] [Z]\ar[r] & S[Z][W]\\
s_{n,i} \ar@{|->}[r] &  (-1)^{i}\cdot a_iW^i\\
Z \ar@{|->}[r] &  ZW.
}
$$
Any graded sub-ring in $k[s_{n,1},\dots,s_{n,n}] [Z]$ defines now a graded sub-algebra in $S[Z][W]$, and from (\ref{elimuniv})  we obtain 
\begin{equation}\label{elimuniv1}
S[h_{n_i}W^{n_i}]\subset S[f(Z)W^n,\Delta^{(e)}(f(Z))W^{n-e}]_{1\leq e\leq n-1}
\end{equation}

Note that $k[Y_i-Y_j]^{\S_n}$ does not involve the variable $Z$, so that $S[h_{n_i}W^{n_i}]\subset S[W]$. For this reason, the algebra $S[h_{n_i}W^{n_i}]$ is called the \emph{elimination algebra}.

\begin{remark}
The elements $\Delta^{(e)}(f(Z))W^{n-e}$ in (\ref{elimuniv1}) are exactly the relatives differential operators applied to $f(Z)$ with weight $n-e$. Consider the Taylor morphism 
$$
\xymatrix@R=0pc @C=1.5pc{
Tay:k[s_{n,1},\dots,s_{n,n}][Z]\ar[r] & k[s_{n,1},\dots,s_{n,n}][Z,T]\\
\ \ \ \  Z \ar@{|->}[r] &  Z+T\\
}
$$
 and the base change morphism $k[s_{n,1},\dots,s_{n,n}]\longrightarrow S$ defined as above. Because of the good behaviour of differentials with base change and by (\ref{tayloruniv}), we obtain $\Delta^{(e)}(f(Z))W^{n-e}$  from the $\Delta^{(e)}(F_n(Z))W^{n-e}$. 
\end{remark}
\end{parrafo}

\begin{parrafo}\label{elim2poly}{\bf Elimination algebra of two polynomials}.
A universal elimination algebra was defined in (\ref{univ_elimalgebra}) for one universal polynomial. These ideas have a natural extension to the case of several polynomials. Here we only consider the case of two polynomials, but the arguments are similar for the case  of more than two.

Fix two positive integers, $r$, $s\in \mathbb{N}$ such that $r+s=n$ and consider $F_r(Z)=(Z-Y_1)\dots(Z-Y_r)$ and $F_s(Z)=(Z-Y_{r+1})\dots(Z-Y_{n})$. The permutation group $\S_r$ acts on $k[Y_1,\dots,Y_r]$ and $\S_s$ acts on $k[Y_{r+1},\dots,Y_n]$. Define 
$$k[H'_{m_1},\dots,H'_{m_{r,s}}]:=k[Y_i-Y_j]_{1\leq i,j\leq n}^{\S_r\times\S_s}$$
as \emph{the universal elimination algebra for two polynomials}. Since $\S_r\times\S_s\subset\S_n$,  there is a natural inclusion
$$k[H_{m_1},\ldots,H_{m_n}] :=k[Y_i-Y_j]_{1\leq i,j\leq
n}^{\S_n}\subset k[H'_{m_1},\ldots,H'_{m_{r,s}}], $$
which is a finite extension of graded algebras.
On the other hand, one can check that
\begin{multline*}
k[Z-Y_1,\ldots,Z-Y_n]^{\S_r\times \S_s} \\
=k[F_r(Z),\Delta^{(e)}(F_r(Z)), F_s(Z),\Delta^{(\ell)}(F_s(Z))]_{e=1,\ldots,r-1,\ell=1,\ldots,s-1}.
\end{multline*}

The inclusion of finite groups $\S_r\times\S_s\subset\S_n$ also shows that there is an inclusion of invariant algebras:
\begin{multline*}
 k[F_n(Z),\Delta^{(j)}(F_n(Z))]_{j=1,\ldots,n-1} \\
  \subset k[F_r(Z),\Delta^{(e)}(F_r(Z)), F_s(Z),\Delta^{(\ell)}(F_s(Z))]_{e=1,\ldots,r-1,\ell=1,\ldots,s-1}.
 \end{multline*}
which is a finite extension of graded rings.

Note that  $\Delta^{(e)}(F_r(Z))$ is homogeneous of degree
$r-e$ for $e=1,\ldots,n-1$, and that $k[Z-Y_1,\ldots,Z-Y_n]^{\S_r\times \S_s}$ is a graded 
subring in $k[Y_1,\ldots,Y_,Z]$. Moreover, there is a natural inclusion
\begin{equation}\label{eqrp}
k[H'_{m_1},\ldots,H'_{m_{r,s}}] \subset k[F_r(Z),\Delta^{(e)}(F_r(Z)), F_s(Z),\Delta^{(\ell)}(F_s(Z))]_{e=1,\ldots,r-1,\ell=1,\ldots,s-1}
\end{equation}
that arises from $k[Y_i-Y_j]_{1\leq i,j \leq n}\subset k[Z-Y_1,\ldots,Z-Y_n].$

Here $F_r(Z)F_s(Z)=F_n(Z) $. The rings
$k[Y_1,\ldots,Y_r]^{\S_r}=k[v_1,\ldots,v_r]$, and $k[Y_{r+1},\ldots,Y_n]^{\S_s}=k[w_1,\ldots,w_s]$ are graded subrings in $k[Y_{1},\ldots,Y_n]$, $F_r(Z)\in k[v_1,\ldots,v_r][Z]$, $F_s(Z)\in k[w_1,\ldots,w_r][Z]$, and there is an inclusion
$$k[H'_{m_1},\ldots,H'_{m_{r,s}}] \subset k[v_1,\ldots,v_r, w_1,\dots ,w_s]$$
arising from  $k[Y_i-Y_j]\subset k[Y_1, \dots, Y_n]$. In particular each $H'_{j}$ is also a weighted homogeneous polynomial in the {\em universal coefficients} $\{v_1,\ldots,v_r, w_1,\ldots,w_s\}$.
\end{parrafo}

\begin{parrafo}\label{arrastre}{\bf Specialization for two polynomials.}
The previous discussion, for the case of two polynomials extends to the case of several polynomials.
These algebras  specialize into subalgebra of the form 
\begin{equation}\label{eqara1}
S[Z][f_i(Z)W^{n_i},\Delta^{(e_i)}(f_i(Z))W^{n_i-e_i}]_{1\leq e_i\leq n_i-1,\ 1\leq i\leq r},
\end{equation}
in the sense of (\ref{specelimalgebra}),
where $f_i(Z)$ are monic polynomials of the form $$f_i(Z)=Z^{n_i}+a_{1,i}Z^{n_i-1}+\dots+a_{n_i,i}$$ for $i=1,\dots,r$. 
The same specialization, applied to the universal elimination algebras (free of the variable $Z$), define algebras, say 
\begin{equation}\label{eqara2}
S[h^{(j)}_{n_1,\dots , n_r}W^{N_{n_1,\dots , n_r}^{(j)}}]_{1\leq j \leq R_{n_1,\dots , n_r}}\subset S[W],
\end{equation}
for suitable positive integers $R_{n_1,\dots , n_r}$.

An important property of  specializations in  (\ref{specelimalgebra}) is their compatibility with finite extensions of graded algebras. So, for example, in the case two polynomial discussed in (\ref{elim2poly}),
we conclude that if $f_n(Z)\in S[Z]$ is a monic polynomial of degree $n$ which factorizes as a product of monic polynomials, say $f_n(Z)=f_r(Z)f_s(Z)$, then there is a natural (and finite!) inclusion of graded rings: 
\begin{multline*}
S[Z][f_n(Z)W^{n},\Delta^{(j)}(f_n(Z))W^{n-j}]_{ 1\leq j\leq n}\\ 
\subset S[Z][f_r(Z) W^r,\Delta^{(e)}(f_r(Z))W^{r-e}, f_s(Z),\Delta^{(\ell)}(f_s(Z))W^{s-\ell}]_{e=1,\ldots,r-1,\ \ell=1,\ldots,s-1}
\end{multline*}
(as subalgebras of $S[Z][W]$). Similarly, a finite extension of graded subalgebras of $S[W]$ is defined by the specialization of the corresponding elimination algebras.
The same holds for more than two polynomials.

\end{parrafo}

\section{Linear space of vertices}\label{sect:vertices}

\begin{parrafo}\label{genproj}
 Let $V^{(d)}$ denote a smooth scheme of dimension $d$, and let $X\subset V^{(d)}$ be a hypersurface such that $X=V(\langle f\rangle)$ locally at a $n$-fold point 
$x\in V^{(d)}$. So $n=\max-\ord f$, the maximum order of the hypersurface in a neighborhood of $x$. We claim that for a sufficiently {\em generic} projection 
$$
\xymatrix{ V^{(d)} \ar[r]^\beta  &
                      V^{(d-1)} }
$$
the hypersurface $X$ can be express, in \'etale topology, as $X=V(f(Z))$, where $f(Z)\in \calo_{V^{(d-1)}, \beta(x)}[Z]$ is a monic polynomial of degree $n$ in a variable $Z$. This will hold under a suitable geometric condition, that will be expressed on $\mathbb{T}_{V^{(d)} ,x}$, the tangent space at the point. In fact, we will show that such conditions on $f$ can be achieved whenever the tangent line, at $x$, of the smooth curve $\beta^{-1}(\beta (x))$, say $\ell\subset \mathbb{T}_{V^{(d)} ,x}$ and the tangent cone of the hypersurface at the point, say $\C_f\subset \mathbb{T}_{V^{(d)} ,x}$, are in general position (intersect only at the origin). 

Let $\{x_{1},\dots,x_{d}\}$ be a regular system of parameters at $\calo_{V^{(d)} ,x}$. Recall that 
$$ \mathbb{T}_{V^{(d)} ,x}=\Spec(gr_{\M}(\calo_{V^{(d)} ,x})),$$
$$gr_{\M}(\calo_{V^{(d)} ,x})=k\oplus\M/ \M^2\oplus\M^2/\M^3\oplus\dots\oplus\M^r/\M^{r+1}\oplus\dots \cong k[X_{1},\dots,X_{d}] ,$$
where $X_i$ denotes the class of $x_i$ in $\M/\M^2$, that is, the initial form.
Let us compute now the initial form of $f$, to do so consider the following exact sequences:
$$\begin{array}{ccccccc}
0\longrightarrow & \langle f\rangle & \longrightarrow & \calo_{V^{(d)} ,x}& \longrightarrow & \calo_{X ,x} & \longrightarrow 0\\
0 \longrightarrow & \M^r\cap \langle f\rangle  & \longrightarrow & \M^r & \longrightarrow & \overline{\M}^r &\longrightarrow 0\\
0 \longrightarrow & [\In(\langle f\rangle)]_{r}  & \longrightarrow & \M^r/\M^{r+1} & \longrightarrow & \overline{\M}^r/ \overline{\M}^{r+1} &\longrightarrow 0
\end{array}$$
where $[\In(\langle f\rangle)]_{r}$ denotes the ideal of the homogeneous forms of degree $r$ in the homogeneous ideal $\In(\langle f\rangle)$.

Here $\M^r/ \M^{r+1}=\overline{\M}^{r}/ \overline{\M}^{r+1}$  for every $r<n$, and the first time that equality fails to hold is at $r=n$; that is, in degree $n$, where the  initial form of $f$, say $\In(f)$, appears. So $gr_{\M}(\calo_{X ,x})=k[X_{1},\dots,X_{d}]/\langle\In(f)\rangle,$ and the tangent cone of $X$ at $x$ is 
$$\C_f=\Spec(gr_{\M}(\calo_{X ,x}))=\Spec(k[X_{1},\dots,X_{d}]/\langle\In(f)\rangle) (\subset \mathbb{T}_{V^{(d)} ,x}).$$
\end{parrafo}

\begin{parrafo}\label{proptransv}
Fix now a smooth morphism $V^{(d)} \overset{\beta}{\longrightarrow} V^{(d-1)} $, defined at a neighborhood of $x$, and let  $\ell$ denote the smooth curve $\beta^{-1} (\beta(x))$. 

As  $f$ has multiplicity $n$ at $\calo_{V^{(d)} ,x}$, the class of $f$, say $\overline{f}$, has order at least $n$ at the local regular ring $\calo_{\ell ,x}$. Moreover, the order is $n$ if and only if the tangent line of $\ell$ at $x$ and $\C_f$ intersect only at the origin of 
$ \mathbb{T}_{V^{(d)} ,x}$.

If we fix a regular system of parameters, say $\{x_1,\dots,x_{d-1}\}$, at $\calo_{V^{(d-1)} ,\beta(x)}$, the ideal defining $\ell$ is given by $\langle x_1,\dots,x_{d-1}\rangle$, and a new parameter $Z$ can be added so as to define a regular system of parameters at $\calo_{V^{(d)} ,x}$. 

Note finally that the geometric condition imposed at the point $x$ can also be expressed by $\Delta^{(n)}(f)(x)\neq 0$, where $\Delta^{(n)}$ is a suitable differential operator of order $n$, relative to $\beta: V^{(d)} \to V^{(d-1)} $. The advantage of this new reformulation in terms of differential operators is that it shows that if the geometric condition holds, for $X$ and $\beta$ at $x$, it also holds at any $n$-fold point of $X$ in a neighborhood of $x$.

Consider now the completion of the local rings in the  previous projection, that is, $\widehat\calo_{V^{(d)} ,x}$ and $\widehat\calo_{V^{(d-1)} ,\beta(x)}$, and apply Weierstrass Preparation Theorem, so the polynomial $f$ can be expressed as
$$u\cdot f(x_{1},\dots,x_{d-1},Z)=Z^n+a_{1}Z^{n-1}+\dots+a_{n},$$
where $u$ is a unit,  $\{x_1,\dots,x_{d-1}\}$ is a regular system of parameters at $\widehat\calo_{V^{(d-1)} ,\beta(x)}$, and after adding the variable $Z$, $\{x_1,\dots,x_{d-1},Z\}$ is a regular system of parameters at  $\widehat\calo_{V^{(d)} ,x}$,  and $a_i\in\widehat\calo_{V^{(d-1)} ,\beta(x)}$.

An similar result holds replacing completion by henselization. In this case, the coefficients $a_i$ are functions in a \'etale neighbourhood of the point $\beta(x)$ in $V^{(d-1)}$.
\end{parrafo}

\begin{parrafo}{\bf The linear space of vertices.}\label{vertices}
Let $V^{(d)}$ be a smooth scheme, $X$ a hypersurface locally described by $f$, and $\C_f\subset \mathbb{T}_{V^{(d)} ,x}$ the tangent cone associated to $X$ at $x$. Given a vector space $\mathbb{V}$, a vector $v\in \mathbb{V}$ defines a translation, say $tr_v(w)=w+v$ for $w\in\mathbb{V}$. There is a largest linear subspace, say $\L_f$, so that $tr_v(\C_f)=\C_f$ for any $v\in\L_f$, this subspace is called the \emph{ linear space of vertices}.

An important property of this subspace $\L_f$ is that for any smooth center $Y$ in $X$, such that $x\in Y$ and $X$ has multiplicity $n$ along $Y$,  the tangent space of $Y$, say $\mathbb{T}_{Y,x}$, is a subspace of $\L_f$.

There is a characterization of this linear space in algebraic terms. An homogeneous ideal $I$ is said to be \emph{closed by differential operators} if for any  homogeneous element $g\in I$ of order $n$ and any differential operator $\Delta^\alpha$ of order $|\alpha|\leq n-1$, then $\Delta^\alpha(g)\in I$. 

\begin{proposition}
Let $I=\langle f_1,\dots,f_r\rangle$ be an homogeneous ideal, where each $f_i$ is an homogeneous element of order $n_i$ for $i=1,\dots,r$. There exists a smallest extension of $I$ to an ideal closed by differential operators, say $\widetilde{I}$, given by
$$\widetilde{I}=\langle\Delta^{\alpha_1}(f_1),\dots,\Delta^{\alpha_r}(f_r)\rangle_{1\leq|\alpha_i|\leq n_i-1}.$$
\end{proposition}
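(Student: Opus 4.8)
The plan is to prove the proposition in two movements: first, show that the candidate ideal
$\widetilde{I}=\langle\Delta^{\alpha_1}(f_1),\dots,\Delta^{\alpha_r}(f_r)\rangle_{1\leq|\alpha_i|\leq n_i-1}$ (where I also allow $|\alpha_i|=0$, recovering the original generators) is itself closed by differential operators; second, show that it is contained in every differential-operator-closed ideal containing $I$, which gives minimality. The key algebraic inputs are the Leibniz-type rules for the operators $\Delta^\alpha$ introduced in \ref{ffbrd}, namely the composition formula expressing $\Delta^\beta\circ\Delta^\alpha$ as a scalar multiple of $\Delta^{\alpha+\beta}$ (more precisely $\Delta^\beta(\Delta^\alpha(g)) = \binom{\alpha+\beta}{\alpha}\Delta^{\alpha+\beta}(g)$ in the formal-power-series setting), together with the product rule $\Delta^\gamma(gh)=\sum_{\alpha+\beta=\gamma}\Delta^\alpha(g)\Delta^\beta(h)$. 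These are exactly the facts recalled via the Taylor morphism, so I would invoke \ref{ffbrd} and \cite{Gr} rather than reprove them.

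For the first movement, I would take an arbitrary homogeneous element $g\in\widetilde I$ of order $m$ and a differential operator $\Delta^\gamma$ with $|\gamma|\leq m-1$, and show $\Delta^\gamma(g)\in\widetilde I$. Writing $g=\sum_i h_i\,\Delta^{\alpha_i}(f_i)$ with $h_i$ homogeneous of the appropriate degree, apply the product rule to get $\Delta^\gamma(g)=\sum_i\sum_{\beta+\delta=\gamma}\Delta^\beta(h_i)\,\Delta^\delta(\Delta^{\alpha_i}(f_i))$, and then the composition rule turns $\Delta^\delta(\Delta^{\alpha_i}(f_i))$ into a scalar multiple of $\Delta^{\alpha_i+\delta}(f_i)$. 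The subtlety is the range constraint: one must check that whenever a term $\Delta^{\alpha_i+\delta}(f_i)$ genuinely appears (i.e. the scalar is nonzero and $\Delta^\beta(h_i)\neq 0$), one has $|\alpha_i+\delta|\leq n_i-1$, so that the term lies among the listed generators. This follows from a degree count: since $g$ has order $m$ and $\Delta^{\alpha_i}(f_i)$ has degree $n_i-|\alpha_i|$, the cofactor $h_i$ has order $\geq m-(n_i-|\alpha_i|)$, hence $\Delta^\beta(h_i)=0$ unless $|\beta|\geq m-(n_i-|\alpha_i|)$ is compatible with $|\beta|\le m-1-|\delta|$; combined with $|\delta|=|\gamma|-|\beta|\le m-1-|\beta|$ one extracts $|\alpha_i|+|\delta|\le n_i-1$. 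I should be a little careful in positive characteristic, where a scalar $\binom{\alpha_i+\delta}{\alpha_i}$ may vanish, but that only removes terms and so does no harm to membership. For terms where $|\alpha_i+\delta|=0$ we land on $f_i$ itself, still in $\widetilde I$.

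For the second movement (minimality), let $J$ be any ideal closed by differential operators with $I\subseteq J$. Each $f_i\in I\subseteq J$, and $f_i$ is homogeneous of order $n_i$; by the defining property of $J$, for every $\alpha_i$ with $1\le|\alpha_i|\le n_i-1$ we get $\Delta^{\alpha_i}(f_i)\in J$. Hence all generators of $\widetilde I$ lie in $J$, so $\widetilde I\subseteq J$. Since $\widetilde I$ is itself such an ideal and contains $I$, it is the smallest one, proving the claim.

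The step I expect to be the main obstacle is the bookkeeping in the first movement: verifying that the differential operators applied to a general element of $\widetilde I$ never produce a $\Delta^{\alpha_i+\delta}(f_i)$ with $|\alpha_i+\delta|\ge n_i$, i.e. nothing "escapes" the prescribed list of generators. This is a purely combinatorial degree/order estimate, but it is where one must be precise about the interplay between the grading, the order of the cofactors $h_i$, and the distribution $\gamma=\beta+\delta$ of the derivative among factors; everything else is a direct application of the Leibniz and composition rules from \ref{ffbrd}.
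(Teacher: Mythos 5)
Your second movement (minimality) is fine and matches what the paper leaves implicit. The genuine gap is in the first movement, exactly where you predicted trouble: the degree count does not close, and in fact the statement you set out to prove there is false. For a nonzero term $\Delta^\beta(h_i)\,\Delta^\delta(\Delta^{\alpha_i}(f_i))$ the only constraint coming from $\Delta^\beta(h_i)\neq 0$ is the \emph{upper} bound $|\beta|\leq \deg h_i = m-(n_i-|\alpha_i|)$, whereas to force $|\alpha_i|+|\delta|\leq n_i-1$ out of $|\delta|=|\gamma|-|\beta|\leq m-1-|\beta|$ you would need a \emph{lower} bound on $|\beta|$ of that same size; nothing provides it, and the term with $\beta=0$ already escapes. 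Concretely, take $I=\langle X^2\rangle\subset k[X,Y]$ (characteristic $\neq 2$), so $\widetilde I=\langle X\rangle$; then $g=XY^k\in\widetilde I$ is homogeneous of degree $k+1$ and $\Delta^{(1,0)}(g)=Y^k\notin\widetilde I$. In your expansion the escaping term is the one with $\beta=0$, $\delta=(1,0)$, where $|\alpha_1+\delta|=n_1$, the derivative of the generator has dropped to degree $0$, and the bare cofactor $h=Y^k$ is left behind. So no bookkeeping can rescue closure of $\widetilde I$ under operators applied to \emph{arbitrary} homogeneous elements of the ideal (arbitrary polynomial cofactors $h_i$): read that way, the smallest ``closed'' ideal containing $\langle X^2\rangle$ is $\langle X,Y\rangle$, not $\langle X\rangle$, and the Proposition's formula itself would be wrong.

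The paper's proof takes a different, and essentially the only viable, route: it verifies the closure property only on (weighted) homogeneous expressions in the generators, reducing to $g=f_if_j$, in keeping with the Rees-algebra origin of these ideals (the homogeneous elements one must control are initial forms of elements of the algebra, i.e.\ weighted homogeneous polynomials in the $f_i$, not arbitrary combinations $\sum h_if_i$). There the product rule gives $\Delta^\alpha(f_if_j)=\sum_{\alpha_1+\alpha_2=\alpha}\Delta^{\alpha_1}(f_i)\Delta^{\alpha_2}(f_j)$, and when one derivative drops to degree $0$ (that is, $|\alpha_1|=n_i$) the surviving cofactor is $\Delta^{\alpha_2}(f_j)$ with $|\alpha_2|<n_j$, hence again one of the listed generators --- precisely the feature your arbitrary cofactors $h_i$ lack. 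You should either adopt this restricted verification (products of generators, or weighted homogeneous polynomials in them, handled by the same Leibniz computation), or first make explicit the weighted sense in which ``closed by differential operators'' is to be understood; as you have set it up, the key claim of your first movement is not provable because it is not true.
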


\begin{proof}
Any homogeneous element of $I$ can be expressed as a homogeneous combination of products of the generators. So it is enough to consider the case $g=f_i\cdot f_j$, where $i,j\in\{1,\dots,r\}$. We claim that $\Delta^{\alpha}(g)\in\widetilde{I}$ for  every $\alpha$ so that $|\alpha|<n_i+n_j$.

Applying the product rule to $g$, we obtain 
$$\Delta^\alpha(f_i\cdot f_j)=\sum_{\alpha_1+\alpha_2=\alpha}\Delta^{\alpha_1}(f_i)\Delta^{\alpha_2}(f_j). $$
Since $\Delta^{\alpha_1}(f_i)=0=\Delta^{\alpha_2}(f_j)$ for $\alpha_1>n_i$, $\alpha_2>n_j$ and $\Delta^{n_i}(f_i)=\Delta^{n_j}(f_j)=1$, we deduce that $\Delta^{\alpha}(g)$ is a linear combination of elements of $\widetilde{I}$.
\end{proof}

\begin{remark}\label{rmk:initial_forms}
Given a regular system of parameters $\{x_1,\dots,x_d\}$ at $\calo_{V^{(d)},x}$, any homogeneous ideal closed by differential operators is defined by
\begin{itemize}
\item Linear forms,
\item Elements of $k[X_1^p,\dots,X_d^p]$,
\item $\dots$
\item Elements of $k[X_1^{
p^m},\dots,X_d^{p^m}]$ for some positive integer $m\in\mathbb{Z}_{>0}$.
\end{itemize}
Suppose now that $k$ is a perfect field, then any homogeneous ideal closed by differential operators $\widetilde{I}$ is, after linear change of variables, of the form
$$\widetilde{I}=\langle X_1,\dots,X_{\tau_0},X_{\tau_0+1}^p\dots,X_{\tau_1}^p,\dots,X_{\tau_{m-1}+1}^{p^m},\dots,X_{\tau_m}^{p^m}\rangle.$$
\end{remark}

If we extend $\langle In(f)\rangle$ to the smallest ideal closed by differential operators, say $\widetilde{I}$, then the zero-set of this homogeneous ideal defines the subspace $\L_f$ we have just defined. In these arguments we are assuming that the underlying field $k$ is perfect.

Similar notions can be defined for Rees algebras. Let $\G$ be a Rees algebra on the smooth scheme $V^{(d)}$. An homogeneous ideal is defined by $\mathcal{G}$ at $x$, say $In_x(\mathcal{G})$, included in $gr_{\M_x}( {\calo}_{V^{(d)},x})$; namely that homogeneous ideal generated by the class of $I_s$ at the quotient $\M_x^s/\M_x^{s+1}$, for all $s$.  Denote this ideal by $\I_{\G,x}$. The ideal $\I_{\G,x}$ defines a cone, say $\C_{\mathcal{G}}$, at $\mathbb{T}_{V^{(d)},x}$. Recall that there is a largest  subspace, say  $\L_{\mathcal{G}}$,  included and acting by translations on $\C_{\mathcal{G}}$. 

One can check that $In_x(G(\mathcal{G}))$  is the smallest (homogeneous) extension of  $\I_{\G,x}=In_x(\mathcal{G})$, closed by the action of the differential operators $\Delta^\alpha$; that is, with the property that if $F$ is an homogeneous polynomial of degree $N$ in the ideal, and if $|\alpha|\leq N-1$, then also $\Delta^\alpha(F)$ is in the  ideal. This homogeneous ideal defines the subspaces $\L_{\mathcal{G}}$,  included in $\C_{\mathcal{G}}$, with the properties stated before.

Recall that $\Sing(\mathcal{G})= \Sing(G(\mathcal{G}))$ (see (\ref{rk33})). The previous discussion shows how the homogeneous ideal at $x$ attached to $G(\G)$, say $In_x(G(\G))$, relates to the one attached to $\G$, say $In_x(\G)$: If $\C_\G$ is the cone associated with $\G$, then the cone associated to $G(\G)$ is the linear subspace $\L_\G$.
\end{parrafo}

\begin{definition}({\bf Hironaka's $\tau$-invariant}). 
$\tau_{{\mathcal G},x}$
will denote the minimum number of variables required to express generators of the ideal
$\I_{\G,x}$. This algebraic definition can be reformulated geometrically: $\tau_{\G,x}$ is the codimension of the linear subspace $\L_{\G,x}$ in $\mathbb{T}_{V^{(d)},x}$.
\end{definition}

\section{The $\tau$-invariant and integral closure of Rees algebras}\label{sect:tau:int:clo}

\begin{parrafo}
In this section, we give an easy proof of the compatibility of $\tau$-invariants and integral equivalence, i.e. given two integrally equivalent Rees algebras, they have the same $\tau$-invariant. This assertion will be needed in the proof of Theorem \ref{tauG_elim}. The proof is based on properties of integrally equivalent algebras and the algebraic definition of the $\tau$-invariant. For alternative proofs see, for example, \cite{Kaw}.
\end{parrafo}

\begin{theorem}\label{equal_tau}
If $\G$ and $\G'$ are two Rees algebras over $V$ with the same integral closure (i.e. $\G$ and $\G'$ are integrally equivalent), then for each $x\in\Sing \G=\Sing \G'$, there is an equality between their $\tau$-invariants, that is, $\tau_{\G,x}=\tau_{\G',x}$.
\end{theorem}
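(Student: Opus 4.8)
The plan is to reduce the statement to the purely algebraic characterization of $\tau_{\G,x}$ as the minimum number of variables needed to express generators of the initial ideal $\I_{\G,x}\subset gr_{\M_x}(\calo_{V^{(d)},x})$, and then to show that this initial ideal depends only on the integral closure of $\G$. First I would pass to the differential closures: by Theorem \ref{thopG} and Remark \ref{rk33} one has $\Sing(\G)=\Sing(G(\G))$ and, more to the point for us, the cone attached to $G(\G)$ at $x$ is the linear space $\L_{\G,x}$, whose codimension is exactly $\tau_{\G,x}$. So it suffices to prove that $G(\G)$ and $G(\G')$ have the same integral closure, or at least the same initial ideal at $x$; in fact I claim $G(\G)$ and $G(\G')$ are themselves integrally equivalent. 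This follows because the operation $\G\mapsto G(\G)$ is compatible with integral closure: the differential operators $\Delta^\alpha$ send an element integral over $\G$ (of appropriate weight) to an element integral over $G(\G)$, so that $\G\sim\G'$ forces $G(\G)\sim G(\G')$. (Alternatively one invokes that $G(\G)$ and $G(\G')$ have the same integral closure because each is the smallest differential algebra containing a member of the common integral-closure class, and integral closure of a Rees algebra is again the same after taking differential closure — a statement available in \cite{VV1}.)

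The second and central step is to show that two integrally equivalent Rees algebras have the same initial ideal at a point $x$ of their common singular locus. Write $\G=\bigoplus I_kW^k$ and $\G'=\bigoplus I'_kW^k$, with integral closures $\overline{\G}=\overline{\G'}=\bigoplus \overline{I}_kW^k$. By the valuative criterion of integral dependence, the ideal $\overline{I}_k$ is determined by $\nu(\overline{I}_k)=\min_j \tfrac{k}{n_j}\nu(I_{n_j})$-type conditions over all divisorial valuations $\nu$; concretely, the order function at $x$ satisfies $\ord_x(\overline{I}_k)\geq k$ precisely when $x\in\Sing(\G)$, and in general $\ord_x$ extends to the relevant Rees valuations in a way that is insensitive to passing to the integral closure. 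The point I want is the cleaner one: for each $k$, $I_k$ and $\overline{I}_k$ have the same integral closure as ideals in $\calo_{V,x}$, hence (since $\calo_{V,x}$ is regular, in particular normal and integrally closed in the needed sense) they have the same order at $x$ and, crucially, the same initial ideal $\In_x(I_k)=\In_x(\overline{I}_k)$ in $gr_{\M_x}(\calo_{V,x})$ — this last equality uses that $gr_{\M_x}$ of a regular local ring is a polynomial ring over a field, so is a domain, together with the fact that initial ideals are unchanged under integral closure of ideals (an element integral over $I_k$ has order $\geq \ord_x(I_k)$, and its leading form lies in $\In_x(I_k)$ by the standard argument with an integral equation, examining leading terms). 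Summing over $k$ gives $\I_{\G,x}=\I_{\overline{\G},x}=\I_{\G',x}$, and the algebraic definition of $\tau$ then yields $\tau_{\G,x}=\tau_{\G',x}$ immediately.

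I would organize the write-up as: (1) recall $\tau_{\G,x}$ is the minimal number of variables in a generating set of $\I_{\G,x}$; (2) prove the lemma that $\In_x(J)=\In_x(\overline{J})$ for any ideal $J\subset\calo_{V,x}$, using an integral equation $f^m+a_1f^{m-1}+\dots+a_m=0$ with $a_i\in J^i$ and comparing leading forms in the graded domain $gr_{\M_x}(\calo_{V,x})$; (3) deduce $\I_{\G,x}=\I_{\G',x}$ degree by degree from the fact that $\G\sim\G'$ means $\overline{I}_k=\overline{I'_k}$ for all $k$; (4) conclude. The main obstacle I anticipate is step (2): one must be careful that the leading form of an integral element $f$ over $J$ really does land in $\In_x(J)$ and not merely have the right degree — this needs the observation that in the integral equation, the term $a_i f^{m-i}$ of lowest $\M_x$-order must come from a nontrivial cancellation, forcing $\ord_x(f)\geq \ord_x(J)$, and then a second look at the equation modulo $\M_x^{m\,\ord_x(f)+1}$ to pin the leading form of $f$ inside the ideal generated by leading forms of $J$. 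Once this lemma is in hand the rest is formal. A cheaper alternative, if one wants to avoid the lemma, is to quote directly from \cite{VV1} that integrally equivalent Rees algebras have the same ``tangent cone'' data at each singular point and invoke the geometric definition of $\tau$ as $\codim \L_{\G,x}$; I would include this as a remark but give the self-contained argument as the main proof.
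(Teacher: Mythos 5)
Your central step (2) is where the argument breaks: the lemma you propose, that the ideal of initial forms is unchanged under integral closure of ideals, is false, and consequently so is the equality $\I_{\G,x}=\I_{\G',x}$ on which everything else rests. Concretely, in $k[x,y]$ localized at the origin the element $xy$ is integral over $J=\langle x^2,y^2\rangle$ (it satisfies $T^2-x^2y^2=0$), it has order $2=\ord_x(J)$, but its leading form $XY$ does not lie in $\In_x(J)=\langle X^2,Y^2\rangle$. At the level of Rees algebras, $\G=\calo_V[x^2W^2,y^2W^2]$ and $\G'=\calo_V[x^2W^2,xyW^2,y^2W^2]$ are integrally equivalent, yet $\I_{\G,0}=\langle X^2,Y^2\rangle\neq\langle X^2,XY,Y^2\rangle=\I_{\G',0}$. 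What the integral-equation argument does give is weaker: the inclusion of leading-form ideals is an \emph{integral extension} of homogeneous ideals, hence only an equality of radicals. (A second, smaller gap: $\overline I_k$ is the degree-$k$ piece of the integral closure of the graded algebra, so the integral equations involve all the $I_{ik}$, not just powers of the ideal $I_k$; the paper sidesteps this by first replacing $\G$, $\G'$ by the Rees rings $\calo_V[I_NW^N]$, $\calo_V[I'_NW^N]$ for $N$ a common multiple of the weights, which have the same $\tau$ as $\G$, $\G'$.)

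Because only radicals are preserved, the missing ingredient is precisely the observation that $\tau_{\G,x}$ depends on $\I_{\G,x}$ only through its radical: the cone $\C_\G$ is the zero set of $\I_{\G,x}$, so $\L_{\G,x}$, and hence $\tau_{\G,x}=\codim\L_{\G,x}$, is determined by $\sqrt{\I_{\G,x}}$. This, together with $\sqrt{\I_{\G,x}}=\sqrt{\langle \In_N(I_N)\rangle}$, is the content of the paper's auxiliary lemmas, after which the proof is: reduce to $\G\subset\G'$, pass to $\G_N\subset\G'_N$, note that $\langle \In_N(I_N)\rangle\subset\langle \In_N(I'_N)\rangle$ is an integral extension and so has equal radicals, and conclude equality of the $\L$'s. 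Your preliminary reduction to the differential closures $G(\G)$, $G(\G')$ is unnecessary, and the claim $\G\sim\G'\Rightarrow G(\G)\sim G(\G')$ is itself a nontrivial statement you would have to justify; likewise the ``cheaper alternative'' of citing \cite{VV1} is not available as stated, since Proposition 4.4(1) there gives equality of singular loci, not of tangent-cone data. If you replace your step (2) by ``integral extension of initial-form ideals, hence equal radicals'' and add the radical-invariance of $\tau$, your outline becomes essentially the paper's argument.
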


Some auxiliary results will be needed to prove the previous Theorem:

\begin{lemma}\label{lemma_aux1}
Given a Rees algebra $\G=\oplus I_nW^n$ defined locally at $x$ by the set of generators $\{f_{1}W^{n_1},\dots,f_{s}W^{n_s}\}$, i.e. $\G=\calo_{V,x}[f_{1}W^{n_1},\dots,f_{s}W^{n_s}]$, then 
$$\I_{\G,x}=\langle In_{n_1}(f_{1}),\dots,In_{n_s}(f_{s})\rangle.$$
\end{lemma}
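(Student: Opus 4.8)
The plan is to prove the two inclusions separately. I will work in the regular local ring $\calo_{V,x}$ with maximal ideal $\M_x$, using freely that $gr_{\M_x}(\calo_{V,x})$ is a polynomial ring over the residue field, hence an integral domain; in particular $\ord_x$ is additive on products and the initial form of a product of elements is the product of their initial forms. I also take $x\in\Sing(\mathcal{G})$, the situation in which $\I_{\G,x}$ and $\tau_{\G,x}$ are considered, so that Proposition~\ref{prop1} yields $\ord_x(f_i)\ge n_i$ for all $i$; consequently $I_n\subseteq\M_x^n$ for every $n\ge1$, and each $In_{n_i}(f_i)$, understood as the class of $f_i$ in $\M_x^{n_i}/\M_x^{n_i+1}$, is a well-defined homogeneous element of $gr_{\M_x}(\calo_{V,x})$.

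The inclusion $\langle In_{n_1}(f_1),\dots,In_{n_s}(f_s)\rangle\subseteq\I_{\G,x}$ is immediate: since $f_iW^{n_i}\in\mathcal{G}$ we have $f_i\in I_{n_i}$, so $In_{n_i}(f_i)$ lies in the image of $I_{n_i}$ in $\M_x^{n_i}/\M_x^{n_i+1}$, which by definition is part of a generating set of $\I_{\G,x}$.

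For the reverse inclusion, recall that $\I_{\G,x}$ is generated by the images of the $I_n$ ($n\ge1$), so it suffices to show that for every $n\ge1$ and every $g\in I_n$ the class $In_n(g)\in\M_x^n/\M_x^{n+1}$ lies in $J:=\langle In_{n_1}(f_1),\dots,In_{n_s}(f_s)\rangle$. Since $\mathcal{G}=\calo_{V,x}[f_1W^{n_1},\dots,f_sW^{n_s}]$, the ideal $I_n$ is spanned as an $\calo_{V,x}$-module by the monomials $f_1^{a_1}\cdots f_s^{a_s}$ with $\sum_i a_in_i=n$, so we may write $g=\sum_{\underline a}c_{\underline a}\,f_1^{a_1}\cdots f_s^{a_s}$ with $c_{\underline a}\in\calo_{V,x}$. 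By additivity of $\ord_x$ and the bounds $\ord_x(f_i)\ge n_i$, each such monomial has order $\ge n$; hence every term lies in $\M_x^n$, the class $In_n(g)$ is a combination of the classes modulo $\M_x^{n+1}$ of these monomials, and it suffices to prove that each such class lies in $J$. If a monomial $f_1^{a_1}\cdots f_s^{a_s}$ (with $\sum_i a_in_i=n$) has order $>n$ its class is $0$; if its order equals $n$, then $\sum_i a_i\ord_x(f_i)=n=\sum_i a_in_i$ together with $\ord_x(f_i)\ge n_i$ forces $\ord_x(f_i)=n_i$ for every $i$ with $a_i>0$, and, since $gr_{\M_x}(\calo_{V,x})$ is a domain, the class of the monomial equals $\prod_{a_i>0}In_{n_i}(f_i)^{a_i}$, which lies in $J$ because $n\ge1$ forces some $a_i>0$.

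I expect the only genuine obstacle to be this last step: extracting the individual equalities $\ord_x(f_i)=n_i$ from the single numerical identity $\sum_i a_i\ord_x(f_i)=n$ — which is precisely where the hypothesis $x\in\Sing(\mathcal{G})$, through Proposition~\ref{prop1}, is indispensable — and then passing from the order of a monomial to the product of the initial forms of its factors via the fact that the associated graded ring of a regular local ring is a polynomial ring, hence an integral domain. The remainder is bookkeeping on how elements of $I_n$ decompose into the given generators.
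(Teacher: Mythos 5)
Your proof is correct and takes essentially the same route as the paper: the paper writes an element $h_n\in I_n$ as a weighted homogeneous polynomial of degree $n$ in the generators $f_1,\dots,f_s$ (your expansion into monomials $f_1^{a_1}\cdots f_s^{a_s}$ with $\sum a_in_i=n$) and then passes to initial forms. You simply make explicit the bookkeeping the paper leaves implicit --- that $\ord_x(f_i)\geq n_i$ at points of $\Sing(\G)$, that monomials of order $>n$ contribute zero, and that the class of a product is the product of the classes in the associated graded ring --- which is a faithful, more detailed version of the same argument.
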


\begin{proof}
Take $h_nW^n\in I_nW^n$. There exits  a weighted homogeneous polynomial of degree $n$, say $G_n(Y_1,\dots,Y_s)\in\calo_V[Y_1,\dots,Y_n]$, where each $Y_i$ has weight $n_i$, such that $G_n(f_{1}W^{n_1},\dots,f_{s}W^{n_s})=h_nW^n$. Considering the initial form of this last expression it follows that $In_{n_i}(f_{i})$ generate every initial form of $In_n(I_n)$ for any $n$, therefore the equality holds.
\end{proof}

\begin{lemma}\label{lemma_aux2}
Considered the Rees algebra defined locally at $x$ as 
$$\G=\calo_{V,x}[f_{1}W^{n_1},\dots,f_{s}W^{n_s}].$$ Let  $N>0$ be an integer such that $N$ is a common multiple of every $n_i$,  $i=1,\dots,s$. Define by $\G_N=\calo_V[I_NW^N]$ the Rees ring attached to $I_N$. Then,
\begin{itemize}
\item[$1.$] $\I_{\G_N,x}=\langle In_N(I_N)\rangle$.
\item[$2.$] $\sqrt{\I_{\G_N,x}}=\sqrt{\langle In_N(I_N)\rangle}=\sqrt{\I_{\G,x}}\ .$
\end{itemize}
\end{lemma}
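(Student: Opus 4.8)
The plan is to prove the two assertions in sequence, using Lemma \ref{lemma_aux1} as the main engine for part (1) and a standard integral-dependence argument for part (2).

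For part (1): $\G_N = \calo_{V,x}[I_NW^N]$ is a Rees ring generated in a single weight $N$. If $\{g_1W^N,\dots,g_rW^N\}$ is a local set of generators of $I_N$ as an ideal (so that $I_N = \langle g_1,\dots,g_r\rangle$), then $\G_N = \calo_{V,x}[g_1W^N,\dots,g_rW^N]$, and Lemma \ref{lemma_aux1} gives immediately $\I_{\G_N,x} = \langle In_N(g_1),\dots,In_N(g_r)\rangle$. It remains to check that this equals $\langle In_N(I_N)\rangle$, the homogeneous ideal generated by the initial forms of \emph{all} elements of $I_N$. The inclusion $\subseteq$ is clear. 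For $\supseteq$, take $h \in I_N$; write $h = \sum_i c_i g_i$ with $c_i \in \calo_{V,x}$ and let $m = \ord_x(h)$. If $m = N$ then $In_N(h)$ is the degree-$N$ part of $\sum_i In_{?}(c_i)In_N(g_i)$ and lies in $\langle In_N(g_i)\rangle$; if $m > N$ then $In_N(h) = 0$ by the convention that $In_N$ records the degree-$N$ component, which is zero. So $\langle In_N(I_N)\rangle = \langle In_N(g_i)\rangle_i = \I_{\G_N,x}$, proving (1). (One should state at the outset the convention being used for $In_N$ of an element of order $>N$, since this is where a careless reader could stumble.)

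For part (2): By Remark \ref{rkK1}, since $N$ is a common multiple of the $n_i$, the algebra $\G = \calo_{V,x}[f_1W^{n_1},\dots,f_sW^{n_s}]$ is integral over the subring $\G_N = \calo_{V,x}[I_NW^N]$; in particular $\G$ and (the subalgebra generated by) $\G_N$ have the same integral closure, so $\Sing(\G) = \Sing(\G_N)$ and, more to the point, for every $n$ one has $I_N^{n/N} \subseteq I_n$ (when $N \mid n$) and $I_n^{N/n_i\cdots}$-type containments hold; concretely $f_i^{N/n_i} W^N \in I_N W^N$ for each $i$, so $f_i^{N/n_i} \in I_N$. Taking initial forms, $In_{n_i}(f_i)^{N/n_i} \in In_N(I_N)$ (again using that the initial form of a power is the power of the initial form when orders add correctly), hence $In_{n_i}(f_i) \in \sqrt{\langle In_N(I_N)\rangle}$ for every $i$, and by Lemma \ref{lemma_aux1} this gives $\I_{\G,x} \subseteq \sqrt{\langle In_N(I_N)\rangle}$, so $\sqrt{\I_{\G,x}} \subseteq \sqrt{\langle In_N(I_N)\rangle}$. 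Conversely, each generator $g_j$ of $I_N$ lies in $I_N$, and since $I_N \subseteq I_{n_i}^{\,N/n_i}$-flavored containments run the other way one gets $In_N(g_j) \in \sqrt{\I_{\G,x}}$; more cleanly, $I_N \subseteq$ the radical of the ideal generated by the $f_i$ at the level of the associated graded, because $\G_N \subseteq \G$ forces $I_N W^N \subseteq \G$ and a weighted-homogeneous expansion of each $g_j W^N$ in the $f_i W^{n_i}$ shows $In_N(g_j)$ is a homogeneous combination of monomials in the $In_{n_i}(f_i)$ (each monomial of degree $N$, hence involving at least one factor), so $In_N(g_j) \in \langle In_{n_i}(f_i)\rangle_i = \I_{\G,x}$, giving $\langle In_N(I_N)\rangle \subseteq \I_{\G,x}$ directly and thus $\sqrt{\langle In_N(I_N)\rangle} \subseteq \sqrt{\I_{\G,x}}$. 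Combining with part (1) yields $\sqrt{\I_{\G_N,x}} = \sqrt{\langle In_N(I_N)\rangle} = \sqrt{\I_{\G,x}}$.

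The main obstacle is bookkeeping with initial forms under the two operations "raise to a power" and "expand a weighted-homogeneous polynomial in lower-weight generators": one must be careful that $In(fg) = In(f)In(g)$ always holds (orders add in a domain), but $In$ of a sum can jump in order, so the clean containments only survive after passing to radicals — which is exactly why the statement is phrased with $\sqrt{\ \cdot\ }$. The power-trick direction ($In_{n_i}(f_i)^{N/n_i}\in In_N(I_N)$) is the one genuinely new computation; the reverse direction is essentially a restatement of Lemma \ref{lemma_aux1} applied after noting that a weighted-homogeneous polynomial of total weight $N$ in variables of weights $n_i$ has every monomial divisible by some $In_{n_i}(f_i)$.
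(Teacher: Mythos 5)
Your proof is correct and follows essentially the same route as the paper: part (1) comes down to the observation that $\G_N$ is generated in the single weight $N$ (the paper phrases this via $\langle In_{mN}(I_N^m)\rangle=\langle In_N(I_N)\rangle^m\subset\langle In_N(I_N)\rangle$, you via Lemma \ref{lemma_aux1} applied to ideal generators of $I_N$), and part (2) uses exactly the paper's two inclusions, with the same key step $f_i^{N/n_i}\in I_N$ giving $In_{n_i}(f_i)^{N/n_i}\in In_N(I_N)$. The only blemish is the garbled ``$I_N\subseteq I_{n_i}^{N/n_i}$-flavored containments'' aside, which you rightly discard in favor of the clean argument that $\langle In_N(I_N)\rangle\subset\I_{\G,x}$.
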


\begin{proof}
\begin{itemize}
\item[$1.$] $\I_{\G_N,x}=\langle In_{mN}(I_{mN})\rangle_{m\geq0}$, and note that 
$$\langle In_{mN}(I_{mN})\rangle=\langle In_N(I_N)\rangle^m\subset \langle In_N(I_N)\rangle.$$
\item[$2.$] Let us check $\sqrt{\langle In_N(I_N)\rangle}=\sqrt{\I_{\G,x}}$. 
The left term inclusion is immediate since $\langle In_N(I_N)\rangle\subset\I_{\G,x}$. By Lemma \ref{lemma_aux1}, it is enough to check the other inclusion for the generators of $\G$. As $f_{n_i}^{\alpha_i}\in I_N$ for $\alpha_i=\frac{N}{n_i}$, then $In_{n_i}(f_{n_i})^{\alpha_i}\in In_N(I_N)$ and the equality holds.
\end{itemize}
\end{proof}

\begin{lemma}\label{lemma_aux3}
Set $N$ and $\G_N$ as in Lemma \ref{lemma_aux2}. Then, $\tau_{\G,x}=\tau_{\G_N,x}$.
\end{lemma}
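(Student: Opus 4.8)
The plan is to reduce the equality $\tau_{\G,x}=\tau_{\G_N,x}$ to the fact that $\I_{\G_N,x}$ and $\I_{\G,x}$ have the same radical (Lemma \ref{lemma_aux2}, part 2), together with the algebraic description of the $\tau$-invariant as the minimal number of variables needed to express the generators of the associated homogeneous ideal. The key observation is that $\tau$ depends only on the linear subspace of vertices $\L$, which is a \emph{linear} object; and passing to the differential closure $In_x(G(-))$ turns both homogeneous ideals into the (monomial-type) ideals described in Remark \ref{rmk:initial_forms}. So the strategy is: first show $\L_{\G,x}=\L_{\G_N,x}$ (equivalently $In_x(G(\G))=In_x(G(\G_N))$), and then invoke that $\tau$ is the codimension of this common subspace.

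First I would record, using Lemma \ref{lemma_aux1} (for $\G$) and Lemma \ref{lemma_aux2}(1) (for $\G_N$), the explicit generators: $\I_{\G,x}=\langle In_{n_1}(f_1),\dots,In_{n_s}(f_s)\rangle$ and $\I_{\G_N,x}=\langle In_N(I_N)\rangle$. By Lemma \ref{lemma_aux2}(2) these two homogeneous ideals have the same radical. Next I would argue that two homogeneous ideals with the same radical have the same differential closure. Indeed, since $\langle In_N(I_N)\rangle\subseteq \I_{\G,x}$, monotonicity of the differential-closure operation gives $In_x(G(\G_N))\subseteq In_x(G(\G))$. For the reverse inclusion, take a generator $In_{n_i}(f_i)$ of $\I_{\G,x}$; then its power $In_{n_i}(f_i)^{\alpha_i}$ (with $\alpha_i=N/n_i$) lies in $\langle In_N(I_N)\rangle\subseteq In_x(G(\G_N))$. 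Now a homogeneous ideal closed under differential operators over a perfect field is, by Remark \ref{rmk:initial_forms}, generated by elements of $k[X_1^{p^j},\dots,X_d^{p^j}]$ in the variables cutting out $\L$; such an ideal is radical after the appropriate linear change of coordinates only in the $p=0$ case, so instead I would argue directly at the level of zero sets: the zero set of $In_x(G(\G))$ is the linear space $\L_{\G,x}$, and likewise for $\G_N$. Since $In_{n_i}(f_i)^{\alpha_i}$ vanishes exactly where $In_{n_i}(f_i)$ vanishes, the common radical of $\I_{\G,x}$ and $\I_{\G_N,x}$ forces the two differential closures to define the same zero set, hence (being defined by linear forms and $p^j$-th powers of linear forms) $\L_{\G,x}=\L_{\G_N,x}$ as linear subspaces of $\mathbb{T}_{V^{(d)},x}$.

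Finally, by the geometric reformulation in the definition of Hironaka's $\tau$-invariant, $\tau_{\G,x}=\codim_{\mathbb{T}_{V^{(d)},x}}\L_{\G,x}$ and $\tau_{\G_N,x}=\codim_{\mathbb{T}_{V^{(d)},x}}\L_{\G_N,x}$, and the equality of the two linear spaces yields $\tau_{\G,x}=\tau_{\G_N,x}$.

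I expect the main obstacle to be the step identifying the differential closures (equivalently the linear spaces $\L$): radical equality of the homogeneous ideals is not by itself enough in positive characteristic to conclude equality of ideals, so one must pass through the zero set and use the special structure of differentially closed homogeneous ideals (Remark \ref{rmk:initial_forms}) — namely that such an ideal is completely determined, up to linear change of coordinates, by the linear span $\L$ it defines, so that two such ideals with the same zero set coincide and carry the same $\tau$. Care is also needed because $I_N$ is an ideal (not given by a finite flagged set of generators with prescribed weights), but Lemma \ref{lemma_aux2} already packages everything we need about it.
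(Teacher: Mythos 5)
Your overall strategy is the paper's: reduce everything to Lemma \ref{lemma_aux2}(2) (equality of radicals), deduce $\L_{\G,x}=\L_{\G_N,x}$, and conclude via the geometric reading $\tau=\codim \L$. The difference, and the gap, is in how you pass from equal radicals to equal vertex spaces. The paper's proof uses directly that $\L_{\G,x}$ is the largest linear subspace acting by translations on the \emph{zero set} of $\I_{\G,x}$, so it depends only on $\sqrt{\I_{\G,x}}$ by definition, and Lemma \ref{lemma_aux2} finishes the argument in one line. You instead characterize $\L$ as the zero set of the differential closure $In_x(G(-))$ and assert that ``the common radical of $\I_{\G,x}$ and $\I_{\G_N,x}$ forces the two differential closures to define the same zero set.'' Your justification (powers vanish where the elements vanish) only re-establishes that the two \emph{original} ideals have the same zero set; it says nothing about their closures. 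And as a general principle the asserted implication is false: $\langle X_1\rangle$ (generator in degree $1$) and $\langle X_1^2, X_1X_2\rangle$ (generators in degree $2$) have the same radical, but their closures under the weighted differential operators are $\langle X_1\rangle$ and $\langle X_1,X_2\rangle$, whose zero sets have different codimension --- in any characteristic. So equality of radicals alone does not control the differential closures; in the specific situation of the lemma the conclusion does hold, but to get it along your route you would have to genuinely compare $G(\G)$ and $G(\G_N)$ (or the translation-invariance of the two cones), which you do not do.

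A secondary inaccuracy: ``two differentially closed homogeneous ideals with the same zero set coincide'' is also false ($\langle X_1\rangle$ versus $\langle X_1^{p}\rangle$, both closed, same zero set); fortunately you only need that they cut out linear spaces of the same codimension, which is what Remark \ref{rmk:initial_forms} gives. The fix is simply to drop the detour through $In_x(G(-))$: use, as in \ref{vertices} and as the paper's proof does, that $\L_{\G,x}$ is the largest subspace acting by translations on $V(\I_{\G,x})=V(\sqrt{\I_{\G,x}})$; then Lemma \ref{lemma_aux2}(2) immediately gives $\L_{\G,x}=\L_{\G_N,x}$ and hence $\tau_{\G,x}=\tau_{\G_N,x}$.
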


\begin{proof}
By definition, $\tau_{\G,x}=\codim(\L_{\G,x})$.  $\L_{\G,x}$ is the linear space of vertices of the zeroset of $\I_{\G,x}$, so it is enough to consider the zeroset of $\sqrt{\I_{\G,x}}$ to define $\L_{\G,x}$. By Lemma \ref{lemma_aux2}, $\sqrt{\I_{\G,x}}=\sqrt{\I_{\G_N,x}}$ so $V(\sqrt{\I_{\G,x}})$ and $V(\sqrt{\I_{\G_N,x}})$ have the same subspace of vertices, that is, $\L_{\G,x}=\L_{\G_N,x}$. So, finally, $\tau_{\G,x}=\tau_{\G_N,x}$.

\end{proof}

\begin{proof} [Proof of Theorem \emph{\ref{equal_tau}}]
Assume that locally at $x$, that the Rees algebras are defined by $\G=\oplus I_nW^n=\calo_{V}[f_{1}W^{n_1},\dots,f_{r}W^{n_r}]$ and $\G'=\oplus I'_nW^n=\calo_{V}[g_{1}W^{m_1},\dots,g_{s}W^{m_s}]$.  Choose a positive integer $N$, such that $N$ is a common multiple of every $n_i$ and every $m_j$, for $i=1,\dots,r$ and $j=1,\dots,s$. Consider the Rees rings attached to $\G$ and $\G'$ given by $\G_N=\calo_{V}[I_NW^N]$ and $\G'_N=\calo_{V}[I'_NW^N]$, respectively. 

It suffices to consider the case where $\G\subset \G'$, so then $I_N\subset I'_N$ and this inclusion is an integral extension of ideals. It follows that $\langle In_N(I_N)\rangle\subset\langle In_N(I'_N)\rangle$ is an integral extension of ideals, as one can check that the conditions of integral dependence hold for the generators. Then, $\sqrt{\langle In_N(I_N)\rangle}=\sqrt{\langle In_N(I'_N)\rangle}$ and by Lemmas \ref{lemma_aux2} and \ref{lemma_aux3} we conclude  that $\tau_{\G,x}=\tau_{\G_N,x}=\tau_{\G'_N,x}=\tau_{\G',x}$, which proves the Proposition.
\end{proof}

\section{The $\tau$-invariant and the elimination algebra}\label{sect:tau:elim}

\begin{parrafo}
In this section we present a result relating the $\tau$-invariant of a differential Rees algebra $\G$ with the $\tau$-invariant of the elimination algebra attached to $\G$. In order to prove this result, we show before a local presentation of $\G$ in terms of the elimination algebra. Although this local presentation is very important by itself  and has other applications (see also \cite{BeV}), we use it here to study the relationship between both $\tau$-invariants.

A weaker relation between the $\tau$-invariants of a differential Rees algebra and its elimination Rees algebra appears in  \cite{VV4}, Proposition 5.12. 
\end{parrafo}

\begin{parrafo}Recall that given a Rees algebra $\G$ and a closed point $x\in \Sing(\G)$, then a tangent cone, say $\C_{\mathcal{G}}$, at $\mathbb{T}_{V^{(d)},x}=\Spec(gr_{\M_x}( {\calo}_{V^{(d)},x}))$ is defined by an homogeneous ideal 
$\I_{\G,x}$ in $gr_{\M_x}( {\calo}_{V^{(d)},x})$ (see \ref{vertices}). If  $\G=\calo_V[f_{1}W^{n_1},\dots,f_{s}W^{n_s}]$, locally at $x$, then $\I_{\G,x}=\langle In_{n_1}(f_{1}),\dots,In_{n_s}(f_{s})\rangle$ (see Lemma \ref{lemma_aux1}). It was indicated in \ref{vertices} that there is a largest  subspace, say  $\L_{\mathcal{G}}$,  included and acting by translations on $\C_{\mathcal{G}}$, and  $\tau_{\G,x}$ is defined as the codimension of this linear subspace. Furthermore,  $\tau_{\G,x}\geq 1$ whenever  $\I_{\G,x}$ is non-zero.
A projection $V^{(d)}\overset{\beta}{\longrightarrow}V^{(d-1)}$ is said to be transversal to $\G$ at $x$ if the tangent line of the fiber $\beta^{-1}(\beta(x))$ at $x$ is not included in the subspace $\L_{\mathcal{G}}$.

Fix a regular system of parameters $\{y_1,y_2,\dots ,y_{d-1}\}$ at $\calo_{V^{(d-1)},\beta(x)}$, and choose an element $Z$ so that $\{y_1,y_2,\dots ,y_{d-1},Z\}$ is a regular system of parameters at $\calo_{V^{(d)},x}$.
Recall that Rees algebras are to be considered up to integral closure. So if the condition of transversality holds one can modify the local generators  $\{f_{1}W^{n_1},\dots,f_{s}W^{n_s}\}$ of $\G$ so that   each
\begin{equation}\label{req1}
f_{i}=Z^{n_i}+a_1^{(i)}Z^{n_i-1}+\cdots + a^{(i)}_{n_i}\in \calo_{V^{(d-1)},\beta(x)}[Z]
\end{equation}
is a monic polynomial in $Z$.

Assume now that $\G$ is a differential Rees algebra relative to $V^{(d)}\overset{\beta}{\longrightarrow}V^{(d-1)}$. In such case $\G$ can be identified, locally at $x$, by
$$\calo_{V^{(d-1)},x}[Z][f_{n_i}(Z)W^{n_i},\Delta^{(e_i)}(f_{n_i}(Z))W^{n_i-e_i}]_{1\leq e_i\leq n_i-1,\ 1\leq i\leq s},$$
via the natural inclusion $\calo_{V^{(d-1)},x}[Z]\subset \calo_{V^{(d)},x}$, which is a specialization of a the universal elimination algebra defined for $s$ monic polynomials as in $(\ref{eqara1})$. In particular an elimination algebra, say $\R_{\G,\beta}\subset \calo_{V^{(d-1)},\beta(x)}[W]$ is defined as in $(\ref{eqara2})$. 
\end{parrafo}

The following Proposition shows a local presentation of a differential Rees algebra in terms of a monic polynomial and the elimination algebra. See also \cite{BeV}.

\begin{proposition}\label{pr_local}{\bf (Local relative presentation)}. 
Let  $x\in\Sing(\G)$ be a close point such that $\tau_{\G,x}\geq 1$. Consider a projection  $V^{(d)}\overset{\beta}{\longrightarrow} V^{(d-1)}$ which is transversal at $x$. Assume that $\G$ is a $\beta$-relative differential algebra and that there exists an element $f_nW^n\in\G$ such that $f_n$ has order exactly $n$ at the local regular ring $\calo_{V^{(d)},x}$, and $f_n=f_n(Z)$ is a monic polynomial of degree $n$ in $\calo_{V^{(d-1)},\beta(x)}[Z]$. Then, at a suitable neighborhood of $x$:
$$\G\sim\calo_{V^{(d)}}[f_n(Z)W^n,\De(f_n(Z))W^{n-e}]_{1\leq e\leq n-1}\odot\R_{\G,\beta},$$
where $\R_{\G,\beta}$ in $\calo_{V^{(d)}}[W]$ is identified with $\beta^*(\R_{\G,\beta})$ and the equivalence $\sim$ is that in Definition $\ref{equiv_Rees}$.
\end{proposition}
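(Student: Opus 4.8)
The plan is to establish the equivalence $\G \sim \calo_{V^{(d)}}[f_n(Z)W^n,\De(f_n(Z))W^{n-e}]_{1\leq e\leq n-1}\odot\R_{\G,\beta}$ by proving inclusions (up to integral closure) in both directions, working locally at $x$ on the completion (or henselization) where the Weierstrass-type presentations of \ref{proptransv} are available. Write $\m := \calo_{V^{(d)}}[f_n(Z)W^n,\De(f_n(Z))W^{n-e}]_{1\leq e\leq n-1}\odot\R_{\G,\beta}$ for the right-hand side. The inclusion $\m \subset G(\G)$ (hence $\m$ is contained in $\G$ up to integral closure, since $\G$ is already a $\beta$-relative differential algebra and one checks $G(\G)\sim\G$ via \ref{rk33} and Theorem \ref{equal_tau}) is the easy direction: $f_n W^n \in \G$ by hypothesis, the $\De(f_n(Z))W^{n-e}$ lie in $\G$ because $\G$ is $\beta$-relative differential and $\Delta^{(e)}$ is a relative differential operator of order $e$, and $\R_{\G,\beta}$ sits inside $\G$ via $\beta^*$ by the very construction in (\ref{eqara2}), being a specialization of the universal elimination algebra built out of the coefficients of the monic generators of $\G$.

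The substantive direction is $\G \subset \overline{\m}$, i.e. every local generator $f_i W^{n_i}$ of $\G$ lies in the integral closure of $\m$. Here I would use that $\tau_{\G,x}\geq 1$ together with transversality to arrange, as in (\ref{req1}), that each $f_i$ is a monic polynomial in $Z$ of degree $n_i$. The key algebraic input is the elimination-theoretic fact recorded in (\ref{elimuniv1}) and (\ref{eqara1})--(\ref{eqara2}): the differential Rees algebra generated by a monic polynomial $g(Z)$ of degree $m$ together with all its relative $\Delta^{(e)}$'s, restricted to $S=\calo_{V^{(d-1)},\beta(x)}$, is a \emph{finite} extension of $S[Z][\text{elimination algebra of }g]$ — and, crucially, the elimination algebra of the single polynomial $f_n$ coincides, up to finite extension, with $\R_{\G,\beta}$ when we simultaneously carry along all the $f_i$; this is exactly the compatibility of specialization with finite extensions of graded algebras explained in \ref{arrastre} (the two-polynomial / several-polynomial case). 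So the plan is: first reduce, by the differential-closure and Weierstrass steps, to the algebra generated by monic polynomials and their relative derivatives; second, apply the finiteness statement of \ref{arrastre} to replace the contribution of the $f_i$ for $i$ with $f_i\neq f_n$ by the elimination algebra $\R_{\G,\beta}$ modulo a finite (hence integral) extension; third, observe that $\De(f_n(Z))W^{n-e}$ together with $f_n(Z)W^n$ and $\R_{\G,\beta}$ generate, up to integral closure, everything the original differential generators did.

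The main obstacle I expect is the bookkeeping in the second step: one must show that the elimination algebra $\R_{\G,\beta}$ attached to the whole collection $\{f_i\}$ is integrally equivalent to (the pullback of) the elimination algebra attached to the single distinguished polynomial $f_n$ \emph{after} the other $f_i$ have been absorbed — that is, that the "extra" eliminants coming from $f_2,\dots,f_s$ do not enlarge the closure beyond $\m$. This is where Theorem \ref{equal_tau} (compatibility of $\tau$, but really the underlying fact that integral equivalence is preserved) and the finiteness of the extensions in \ref{elim2poly}--\ref{arrastre} must be combined carefully, keeping track of weights. A secondary technical point is descent: the presentation is obtained on the completion or henselization, and one must invoke faithful flatness (as noted after Definition \ref{equiv_Rees}, integral equivalence is stable under pull-back by smooth morphisms, and one uses the converse via faithful flatness) to conclude the equivalence $\sim$ holds on an actual Zariski neighborhood of $x$. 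Once these are in place, the equivalence $\G\sim\m$ follows by combining the two inclusions with the fact that $\sim$ is generated by integral closure.
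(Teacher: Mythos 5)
Your ``easy'' direction is essentially fine: since $\G$ is assumed $\beta$-relative differential, the elements $\De(f_n(Z))W^{n-e}$ already lie in $\G$, and $\beta^*(\R_{\G,\beta})\subset\G$ by construction, so the right-hand side algebra is contained in $\G$ directly. (The parenthetical justification via $G(\G)\sim\G$ is both unnecessary and false in general: Remark \ref{rk33} and Theorem \ref{equal_tau} give equality of singular loci and of $\tau$, not integral equivalence of $\G$ with its absolute differential closure.) The genuine gap is in the substantive direction $\G\subset\overline{\m}$. The integrality you need --- that the algebra generated by \emph{all} the monic generators $f_i$ and their relative derivatives is integral over $\calo_{V^{(d)}}[f_n(Z)W^n,\De(f_n(Z))W^{n-e}]\odot\R_{\G,\beta}$ --- is precisely the content of the Proposition, and it is not delivered by the finiteness statements of \ref{elim2poly}--\ref{arrastre}: those concern a single monic polynomial factoring as a product of monic factors, a different situation. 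Moreover the two auxiliary claims you lean on are false. First, $S[Z][f(Z)W^n,\De(f(Z))W^{n-e}]$ is never finite over $S[Z]$ adjoined the elimination algebra of $f$ (in the universal setting the former has transcendence degree $n+1$ over $k$, the latter $n$; the inclusion (\ref{elimuniv1}) goes the other way and is not finite). Second, the elimination algebra of the single polynomial $f_n$ does \emph{not} coincide up to finite extension with $\R_{\G,\beta}$ for the whole collection: already for two generators $Z-a$ and $Z-b$ the joint elimination algebra contains $(a-b)W$, which is not integral over the (trivial) elimination algebra of $Z-a$ alone. So ``absorbing'' the other $f_i$ into $\R_{\G,\beta}$ modulo a finite extension cannot be quoted; it has to be proved.

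The missing idea is the invariant-theoretic argument carried out in the universal setting. For two generators, put $S=k[Z-Y_1,\dots,Z-Y_n,Z-V_1,\dots,Z-V_m]$ with $\S_n\times\S_m$ acting; the key observation is that $S$ is generated by the two subrings $k[Z-Y_1,\dots,Z-Y_n]$ and $S'=k[Y_2-Y_1,\dots,Y_n-Y_1,V_1-Y_1,\dots,V_m-Y_1]$ (because $Z-V_j=(Z-Y_1)+(Y_1-V_j)$), and each of these is integral over its subring of invariants, namely $k[\De(F_n(Z))]_{0\leq e\leq n-1}$ and the universal elimination algebra respectively. Hence $S$, and therefore the intermediate ring $S^{\S_n\times\S_m}$ --- whose specialization is exactly the local description of the $\beta$-relative differential algebra $\G$ --- is integral over the compositum of those two invariant subalgebras, whose specialization is exactly $\m$. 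Compatibility of specialization with integral (finite) extensions of graded algebras, as in \ref{arrastre}, then transfers this to $\calo_{V^{(d-1)},\beta(x)}$, and the Weierstrass/transversality step (\ref{proptransv}, (\ref{req1})) plus descent handles the passage to an actual neighborhood, as you indicate. Without this universal integrality argument (or some substitute for it), your second step does not go through.
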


\begin{proof}
We may assume that $f_n(Z)\in \{f_{1}W^{n_1},\dots,f_{s}W^{n_s}\}$ as in (\ref{req1}).
Let us check these assertions in the universal case. In order to simplify notation we consider here the case of two generators (i.e., the case $s=2$). So consider variables $Z$, $Y_i$ y $V_j$ over a field $k$, and
$$F_n(Z)=(Z-Y_1)\cdot(Z-Y_2)\dots(Z-Y_n).$$
This polynomial is universal of degree $n$, and  $f_n=f_n(Z)$ is a pull-back of  $F_n(Z)$. Let
$$G_m(Z)=(Z-V_1)\cdot(Z-V_2)\dots(Z-V_m)$$
be the universal polynomial of degree $m$. A natural inclusion $\R_{\G,\beta}\subset \G$ arises from 
$(\ref{eqrp})$.

The permutation group $\S_n\times \S_m$ acts on $k[Z,Y_1,\dots,Y_n,V_1,\dots,V_m]$. This group also acts on 
$$S=k[Z-Y_1,Z-Y_2,\dots,Z-Y_n,Z-V_1,Z-V_2,\dots,Z-V_m].$$
The subring of invariants of $S$, $S^{\S_n\times\S_m}$, is
$$k[\De(F_n(Z)),\Delta^{(e')}(G_m(Z))]_{0\leq e\leq n-1,\ 0\leq e'\leq m-1},$$
where $\De(F_n(Z))$ is an homogeneous polynomial of degree $n-e$ and $\Delta^{(e')}(G_m(Z))$ is homogeneous of degree $m-e'$. We add a dummy variable $W$ that simply will express the degree. Then, the subring of invariants $S^{\S_n\times\S_m}$ is
$$k[\De(F_n(Z))W^{n-e},\Delta^{(e')}(G_m(Z))W^{m-e'}]_{0\leq e\leq n-1,\ 0\leq e'\leq m-1}.$$

The universal elimination algebra is defined as the invariant ring of $\S_n\times\S_m$ acting on
$$ S'=k[(Y_2-Y_1),\dots,(Y_n-Y_1),(V_1-Y_1),\dots,(V_m-Y_1)].$$

The key observation to prove the assertion is that  $S$ is spanned by two subrings: $k[Z-Y_1,\dots,Z-Y_n]$ and $S'$. The subring of invariants on the first is 
$k[\De(F_n(Z))W^{n-e}]_{0\leq e\leq n-1}$
and the one of the second is the universal elimination algebra.  

So, both invariant algebras are included in $S^{\S_n\times\S_m}$; and in order to prove the claim it suffices to show that $S^{\S_n\times\S_m}$ is an integral extension of the subalgebra spanned by the two invariant subalgebras. To prove this last condition note that $S$ is an integral extension of the subalgebra spanned by the two invariant subalgebras. This proves the claim.
\end{proof}

\begin{theorem}\label{tauG_elim}
Let $V^{(d)}$ be a smooth scheme of dimension $d$, $\G$ a differential Rees algebra, $x\in\Sing(\G)$ a closed point, and suppose that $\tau_{\G,x}\geq 1$. Fix a generic projection $V^{(d)}\overset{\beta}{\longrightarrow}V^{(d-1)}$ (see \emph{\ref{genproj}}). Then an elimination algebra $\R_{\G,\beta}$ is defined locally at $\calo_{V^{(d-1)},\beta(x)}$ , and its $\tau$-invariant drops by one, that is,
$$\tau_{\R_{\G,\beta},\beta(x)}=\tau_{\G,x}-1.$$
\end{theorem}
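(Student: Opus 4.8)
The idea is to reduce to the Local Relative Presentation (Proposition~\ref{pr_local}) and the invariance of $\tau$ under integral closure (Theorem~\ref{equal_tau}), and then compute the linear space of vertices in coordinates. Since $\tau_{\G,x}\geq 1$ we have $\L_\G\subsetneq\mathbb{T}_{V^{(d)},x}$, so a generic $\beta$ is transversal at $x$; being $k$-differential, $\G$ is $\beta$-relative differential, and $\R_{\G,\beta}$ is defined. Take local generators $\{f_1W^{n_1},\dots,f_sW^{n_s}\}$ of $\G$, monic in $Z$ as in the paragraph preceding Proposition~\ref{pr_local}. Transversality forces the fibre direction $\bar Z$ not to lie in $V(\overline{f_j})$ for some $j$, where $\overline{f_j}=In_{n_j}(f_j)$; hence this $\overline{f_j}$ has nonzero pure term $Z^{n_j}$, i.e. $\ord_x f_j=n:=n_j$ exactly and $\C_{f_j}$ is transversal to the fibre line, so after Weierstrass preparation the hypotheses of Proposition~\ref{pr_local} hold for $f_n:=f_j$, giving
$$\G\ \sim\ \G':=\calo_{V^{(d)}}[f_n(Z)W^n,\De(f_n(Z))W^{n-e}]_{1\leq e\leq n-1}\odot\beta^{*}(\R_{\G,\beta}).$$
By Theorem~\ref{equal_tau}, $\tau_{\G,x}=\tau_{\G',x}$, and $\sqrt{\I_{\G',x}}=\sqrt{\I_{\G,x}}$ (as in the proof of that theorem), so $\C_{\G'}=\C_\G=\L_\G$ is linear because $\G$ is differential; thus $\tau_{\G,x}=d-\dim\C_{\G'}$. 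Recalling that the elimination algebra of a (relative) differential algebra is again differential (\cite{VV4}), $\C_{\R_{\G,\beta}}=\L_{\R_{\G,\beta}}$ is linear and $\tau_{\R_{\G,\beta},\beta(x)}=(d-1)-\dim\C_{\R_{\G,\beta}}$. Hence it suffices to prove $\dim\C_{\G'}=\dim\C_{\R_{\G,\beta}}$.

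By Lemma~\ref{lemma_aux1}, $\I_{\G',x}$ is generated by the initial forms of the displayed generators. One checks $In_{n-e}(\De(f_n))=\De(\overline{f_n})$ (the relative Hasse derivative $\De$ of a homogeneous degree-$n$ form is homogeneous of degree $n-e$ or zero, and it has order $>n-e$ on $f_n-\overline{f_n}$), and $In(\beta^{*}h)=\beta^{*}(In(h))$, which lies in the subring $k[Y_1,\dots,Y_{d-1}]$ of $gr_{\M_x}\cong k[Y_1,\dots,Y_{d-1},\bar Z]$. Therefore, writing $d\beta$ for the projection forgetting $\bar Z$,
$$\C_{\G'}=V(\I_{\G',x})=\{(Y,\bar Z)\ :\ \overline{f_n}(T)|_{Y}=(T-\bar Z)^n\}\ \cap\ (d\beta)^{-1}(\C_{\R_{\G,\beta}}),$$
since $\overline{f_n}(\bar Z)=\De(\overline{f_n})(\bar Z)=0$ for $1\leq e\leq n-1$ says exactly that $\bar Z$ is an $n$-fold root of the monic degree-$n$ polynomial $\overline{f_n}(T)$.

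Now consider $\pi:=d\beta|_{\C_{\G'}}\colon\C_{\G'}\to\C_{\R_{\G,\beta}}$. It is injective: if $(T-\bar Z_1)^n=\overline{f_n}(T)|_Y=(T-\bar Z_2)^n$ then $\bar Z_1=\bar Z_2$ (compare the coefficient of $T^{n-1}$ when $p\nmid n$; in general write $n=p^{e}m$ with $p\nmid m$ and use injectivity of Frobenius on the perfect field $k$). It is surjective: $\R_{\G,\beta}$ contains the elimination algebra $\R_{f_n}$ of the single polynomial $f_n$ (the root differences of $f_n$ occur among those of the whole family, cf. \ref{elim2poly}), so $\C_{\R_{\G,\beta}}\subseteq\C_{\R_{f_n}}$; and, using Lemma~\ref{lemma_aux1} together with the fact that initial forms commute with the weighted-homogeneous universal invariants $H_{n_i}$ evaluated at the coefficients $a_l$ of $f_n$ (which satisfy $\ord a_l\geq l$), one identifies $\I_{\R_{f_n},\beta(x)}$ with the specialization of the universal elimination algebra at the coefficients of $\overline{f_n}$. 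Since the common zero set of the positive-degree part of $k[Y_i-Y_j]^{\S_n}$ is the origin of the space of root differences (the null-cone of a finite group is $\{0\}$), its image in the space of monic degree-$n$ polynomials is the locus of perfect $n$-th powers, whence $\C_{\R_{f_n}}=\{Y:\overline{f_n}(T)|_Y\text{ is a perfect }n\text{-th power}\}$. Consequently $\pi(\C_{\G'})=\C_{\R_{\G,\beta}}\cap\C_{\R_{f_n}}=\C_{\R_{\G,\beta}}$, so $\pi$ is a bijective morphism, $\dim\C_{\G'}=\dim\C_{\R_{\G,\beta}}$, and $\tau_{\G,x}=d-\dim\C_{\G'}=(d-1)-\dim\C_{\R_{\G,\beta}}+1=\tau_{\R_{\G,\beta},\beta(x)}+1$.

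The main obstacle is the surjectivity of $\pi$: this is the only step that uses the genuine content of elimination --- the null-cone computation for $\S_n$ and the compatibility of the universal construction with passing to initial forms --- and it is equivalent to the fact that the eliminated cone is linear, i.e. that the elimination algebra of a differential algebra is again differential. The remaining ingredients (the reduction through Proposition~\ref{pr_local}, Theorem~\ref{equal_tau}, and the coordinate description of $\C_{\G'}$) are formal.
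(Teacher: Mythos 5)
Your proof is correct in substance, but the decisive step is genuinely different from the paper's. Both arguments share the same skeleton: transversality of a generic $\beta$, reduction to the local relative presentation of Proposition \ref{pr_local}, and invariance of $\tau$ under integral equivalence (Theorem \ref{equal_tau}). The paper, however, exploits the hypothesis that $\G$ is differential \emph{before} choosing the monic element: over the perfect residue field, $\I_{\G,x}$ is normalized to $\langle X_1^{p^{e_1}},\dots,X_r^{p^{e_r}}\rangle$ (Remark \ref{rmk:initial_forms}), and the transversal generator $f_1$ is chosen with $In(f_1)=X_1^{p^{e_1}}$; Weierstrass preparation then forces the coefficients $a_i$ to have order strictly greater than $i$, so the factor $\widetilde{\G}$ contributes only the variable $X_1$ to the tangent ideal, while $\beta^*(\R_{\G,\beta})$ involves only the remaining variables, and $\tau_{\widetilde\G\odot\beta^*(\R_{\G,\beta})}=1+\tau_{\R_{\G,\beta}}$ falls out by counting variables. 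You instead keep an arbitrary transversal monic generator and compute with the cones: the identification of $\C_{\G'}$ with the locus where $\overline{f_n}(Y,T)=(T-\bar Z)^n$ inside $(d\beta)^{-1}(\C_{\R_{\G,\beta}})$, injectivity of $d\beta$ there, and the surjectivity argument via $In_{N_j}(h_j(a))=H_j(\overline{a})$ (using $\ord a_l\geq l$) together with the triviality of the null-cone of $k[Y_i-Y_j]^{\S_n}$, are all sound and characteristic-free. What each buys: the paper's route is shorter and uses only facts established in the paper; yours yields the stronger geometric statement $\C_{\R_{\G,\beta}}=d\beta(\C_{\G,x})$, an explicit bijection of cones, but as written it imports from \cite{VV4}/\cite{VKyoto} the fact that the elimination algebra of an (absolute, not merely relative) differential algebra is again differential, in order to write $\tau_{\R_{\G,\beta}}=(d-1)-\dim\C_{\R_{\G,\beta}}$. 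That import is avoidable: once surjectivity is proved, $\C_{\R_{\G,\beta}}$ is the image of the linear space $\C_\G=\L_\G$ under the linear map $d\beta$, hence is itself a linear subspace, so $\L_{\R_{\G,\beta}}=\C_{\R_{\G,\beta}}$ and the codimension count follows; reordering your argument in this way makes it self-contained relative to the paper.
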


\begin{proof}
Fix a regular system of parameters $\{x_1,\dots,x_d\}$ of $\calo_{V^{(d)},x}$. So the graded ring is given by $k'[X_1,\dots,X_d]$ where $X_i$ denotes the initial form of $x_i$. Here $k'$ is the residue field of the local ring at the closed point. If we assume that $k'$ is a perfect field,for a suitable choice of $\{x_1,\dots,x_{d}\}$, then
$\I_{\G,x}=\langle X_1^{p^{e_1}},\dots, X_r^{p^{e_r}}\rangle$ for certains non-negative integers $e_i,i=1,\dots,r$ (see Remark \ref{rmk:initial_forms}). 

So, there exist elements $f_iW^{p^{e_i}}\in\G$ for $i=1,\dots,r$, such that $In_{p^{e_i}}(f_i)=X_i^{p^{e_i}}$. Since $\beta$ is generic, then $\C_\G\cap\ell=\{0\}$, where $\ell$ denotes the tangent line to the fiber of the projection. It follows that $\C_{f_i}\cap\ell=\{0\}$ for some $i\in\{1,\dots,r\}$. Assume that this condition is achieved by $i=1$. 

By Weierstrass Preparation Theorem it follows that in an \'etale neighborhood of $x$, $f_1(x_1)=x_1^{p^{e_1}}+a_1x_1^{p^{e_1}-1}+\dots+a_{p^{e_1}}$, where $a_i\in\calo_{V^{(d-1)},\beta(x)}$ and the order of each $a_i$ is  $>i$. Note that we obtain a strict inequality since $In_{p^{e_1}}(f_1)=X_1^{p^{e_1}}$.

By \ref{pr_local} we may assume that there is a local relative presentation of the form
$$\G\sim \calo_{V^{(d)}}[f_1(x_1)W^{p^{e_1}},\Delta^{(\alpha)}(f_1(x_1))W^{p^{e_1}-\alpha}]_{1\leq \alpha \leq p^{e_1}-1}\odot\beta^*(\R_{\G,\beta}).$$

Denote by $\widetilde{\G}$ the Rees algebra defined as 
$$\widetilde{\G}=\calo_{V^{(d)}}[f_1(x_1)W^{p^{e_1}},\Delta^{(\alpha)}(f_1(x_1))W^{p^{e_1}-\alpha}]_{1\leq \alpha \leq p^{e_1}-1}.$$ 
Since $In_{p^{e_1}}(f(x_1)=X_1^{p^{e_1}}$, then $\tau_{\widetilde{\G},x}=1$ and the only variable needed to define the generators of  $\I_{\widetilde{\G},x}$ is $X_1$. On the other hand, we have eliminated the variable $x_1$ at $\beta^*(\R_{\G,\beta})$, so $X_1$ is not needed to define generators of $\I_{\beta^*(\R_{\G,\beta})}$.

Finally,  since $\G\sim\widetilde{\G}\odot\beta^*(\R_{\G,\beta})$ applying Theorem \ref{equal_tau}, it follows that $\tau_{\G}=\tau_{\G_1\odot\R_{\G,\beta}}$, and by the previous arguments, $\tau_{\G_1\odot\R_{\G,\beta}}=1+\tau_{\R_{\G,\beta}}$. Finally,
$\tau_{\G}=1+\tau_{\R_{\G,\beta}}.$
\end{proof}

In the following corollary, we present the relationship  between $\tau$-invariants in a more general setting; whenever $\G$ is only a relative differential algebra (see \cite{VV4}, \cite{BV3} or \cite{BeV} for more applications of this relative differential algebras in the problem of resolution of singularities). 

\begin{corollary}
In the general case, when $\G$ is relative differential to $V^{(d)}\overset{\beta}{\longrightarrow} V^{(d-1)}$, the following relationship between the $\tau$-invariants holds
$$\tau_{\R_{\G,\beta}}\leq \tau_{\G}-1$$
\end{corollary}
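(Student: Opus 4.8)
The plan is to reduce the relative case to the absolute one already settled in Theorem \ref{tauG_elim} by passing to the absolute differential saturation $G(\G)$ of $\G$. First I would observe that since $\G$ is $\beta$-relative differential, we have $\G \subseteq G(\G)$, and $G(\G)$ is an absolute differential Rees algebra. By Remark \ref{rk33} the singular loci agree, so $x \in \Sing(G(\G))$; moreover $G(\G)$ is in particular $\beta$-relative differential (since $Diff^r_\phi \subset Diff^r_k$), so its elimination algebra $\R_{G(\G),\beta}$ is defined locally at $\beta(x)$ by the same universal construction of Section \ref{sect:elim:alg}. The key input from the absolute theory is that $\tau_{G(\G),x}\geq 1$: indeed, passing to $G(\G)$ replaces the cone $\C_\G$ by the linear subspace $\L_\G$ (the discussion closing Section \ref{sect:vertices}), so $\I_{G(\G),x}$ is the homogeneous ideal of a proper linear subspace, hence non-zero whenever $\I_{\G,x}$ is non-zero — which is the case here since $\tau_{\G,x}\geq 1$. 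Thus Theorem \ref{tauG_elim} applies to $G(\G)$ and gives $\tau_{\R_{G(\G),\beta},\beta(x)} = \tau_{G(\G),x} - 1$.

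Next I would compare the four quantities $\tau_{\G,x}$, $\tau_{G(\G),x}$, $\tau_{\R_{\G,\beta},\beta(x)}$, $\tau_{\R_{G(\G),\beta},\beta(x)}$. On the one hand, since $\I_{\G,x}\subseteq \I_{G(\G),x}$, the linear space of vertices can only shrink, i.e. $\L_{G(\G),x}\subseteq \L_{\G,x}$, hence $\tau_{\G,x}\leq \tau_{G(\G),x}$. On the other hand, because the specialization construction of Section \ref{sect:elim:alg} is functorial for inclusions of graded algebras, the inclusion $\G\subseteq G(\G)$ induces an inclusion of elimination algebras $\R_{\G,\beta}\subseteq \R_{G(\G),\beta}$ (or at least $\R_{\G,\beta}\subseteq \R_{G(\G),\beta}$ up to integral closure); applying the same ``vertices shrink under enlargement'' principle to these gives $\tau_{\R_{\G,\beta},\beta(x)} \leq \tau_{\R_{G(\G),\beta},\beta(x)}$. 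Combining,
$$
\tau_{\R_{\G,\beta},\beta(x)} \leq \tau_{\R_{G(\G),\beta},\beta(x)} = \tau_{G(\G),x} - 1.
$$
This already yields a bound, but one must still convert the right-hand side into $\tau_{\G,x}-1$. Since $\tau_{\G,x}\leq \tau_{G(\G),x}$, the naive inequality points the wrong way, so the argument needs a small refinement: I would instead use that the \emph{generators} of $\I_{\G,x}$ already involve, by transversality of $\beta$, a variable ($X_1$ in the notation of the proof of Theorem \ref{tauG_elim}) which is eliminated in passing to $\R_{\G,\beta}$, exactly as in the absolute proof, so that $\tau_{\R_{\G,\beta},\beta(x)}\leq \tau_{\G,x}-1$ directly, without first saturating.

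In fact the cleanest route is the direct one: run the proof of Theorem \ref{tauG_elim} verbatim but only demanding that $\G$ be $\beta$-relative differential. One chooses $f_1 W^{p^{e_1}}\in\G$ with $In_{p^{e_1}}(f_1)=X_1^{p^{e_1}}$ a generator of $\I_{\G,x}$ involving the eliminated variable $x_1$; the local relative presentation of Proposition \ref{pr_local} still applies (its proof only uses the relative differential structure and the existence of a monic $f_n(Z)$), giving $\G\sim\widetilde{\G}\odot\beta^*(\R_{\G,\beta})$ with $\widetilde\G$ generated from $f_1$ and its relative derivatives; then $\I_{\beta^*(\R_{\G,\beta})}$ needs no more than $\tau_{\G,x}-1$ variables to express its generators (all the variables of $\I_{\G,x}$ except $X_1$, which has been eliminated), so by the algebraic definition of $\tau$ we get $\tau_{\R_{\G,\beta}}\leq \tau_{\G,x}-1$. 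The main obstacle — and the reason one only gets an inequality rather than equality — is that without the absolute differential structure one cannot guarantee that $\I_{\G,x}$ contains \emph{enough} derivatives of the remaining generators to force $\tau_{\R_{\G,\beta}}$ all the way up to $\tau_{\G,x}-1$; the enlargement $\G\rightsquigarrow G(\G)$ may genuinely raise $\tau$, and correspondingly the elimination algebra of the smaller $\G$ may have strictly smaller $\tau$. So the careful point is simply to check that the \emph{lower} bound argument of Theorem \ref{tauG_elim} (the part producing $\tau_{\G}\geq 1+\tau_{\R_{\G,\beta}}$, i.e. $\tau_{\R_{\G,\beta}}\leq \tau_{\G}-1$) uses nothing beyond the relative presentation and Theorem \ref{equal_tau}, both of which are available here.
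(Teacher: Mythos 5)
Your first route is the paper's route, and you abandon it one step from the end. The fact you are missing is that the $\tau$-invariant is compatible with differential saturation: $\tau_{G(\G),x}=\tau_{\G,x}$, not merely $\tau_{\G,x}\leq\tau_{G(\G),x}$. This is contained in the discussion of \ref{vertices} that you yourself quote: the cone attached to $G(\G)$ at $x$ is the linear subspace $\L_{\G}$, and the space of vertices of a linear subspace is that subspace itself, so $\L_{G(\G),x}=\L_{\G,x}$ and the codimensions coincide. With this, your own chain $\tau_{\R_{\G,\beta},\beta(x)}\leq \tau_{\R_{G(\G),\beta},\beta(x)}=\tau_{G(\G),x}-1$ already reads $=\tau_{\G,x}-1$, which is the corollary; and indeed the paper's proof is exactly this: the inclusion $\R_{\G,\beta}\subset\R_{G(\G),\beta}$, Theorem \ref{tauG_elim} applied to $G(\G)$, and the equality $\tau_{G(\G)}=\tau_{\G}$. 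Your claim that the saturation $\G\subset G(\G)$ ``may genuinely raise $\tau$'' is therefore incorrect in this framework, and it is what makes you declare that the inequality points the wrong way. A caution: the general principle you invoke, that enlarging the algebra shrinks the space of vertices, is false for arbitrary inclusions (for instance $\calo[(x^2+yz)W^2]\subset\calo[(x^2+yz)W^2,xW,yW]$ drops $\tau$ at the origin from $3$ to $2$); for $\G\subset G(\G)$ what saves the argument is precisely the description of $\C_{G(\G)}$ as $\L_{\G}$, which gives equality, while the comparison of the two elimination algebras is the step the paper itself takes.

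The fallback ``direct'' route has a genuine gap. The proof of Theorem \ref{tauG_elim} begins by putting $\I_{\G,x}$ in the form $\langle X_1^{p^{e_1}},\dots,X_r^{p^{e_r}}\rangle$ and choosing $f_1$ with $In_{p^{e_1}}(f_1)=X_1^{p^{e_1}}$; this normal form (Remark \ref{rmk:initial_forms}) requires $\I_{\G,x}$ to be closed under differential operators, i.e.\ it uses that $\G$ is an absolute differential algebra, which is exactly the hypothesis the corollary drops. For a merely $\beta$-relative differential algebra you can still produce a monic $f_n$ of order exactly $n$ and invoke Proposition \ref{pr_local} to get $\G\sim\widetilde{\G}\odot\beta^{*}(\R_{\G,\beta})$, but now $In_n(f_n)$ is an arbitrary degree-$n$ form, $\tau_{\widetilde{\G},x}$ need not be $1$, and your assertion that the generators of $\I_{\beta^{*}(\R_{\G,\beta}),x}$ involve only ``the variables of $\I_{\G,x}$ except $X_1$'' is unsubstantiated: the elements of the elimination algebra are universal expressions in the coefficients of the monic generators, and nothing in the relative setting ties their initial forms to the variables needed for $\I_{\G,x}$ --- obtaining that control is the actual content of the absolute theorem. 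Nor is the implicit step $\tau_{\widetilde{\G}\odot\beta^{*}(\R_{\G,\beta})}\geq 1+\tau_{\R_{\G,\beta}}$ formal, by the same failure of monotonicity noted above. So the relative case should not be proved by rerunning the absolute proof; it should be reduced to Theorem \ref{tauG_elim} through $G(\G)$, as in your first paragraph, and closed with the equality $\tau_{G(\G)}=\tau_{\G}$.
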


\begin{proof}
Since, $\G\subset G(\G)$, where $G$ denotes the Giraud's operator (see Theorem \ref{thopG}), it follows that $\R_{\G,\beta}\subset \R_{G(\G),\beta}=\mathcal{H}$, where $\R_{G(\G),\beta}$ denotes the elimination algebra attached to $G(\G)$. By the previous inclusion and Theorem \ref{tauG_elim},
$$\tau_{\R_{\G,\beta}}\leq \tau_{\mathcal{H}}=\tau_{G(\G)}-1=\tau_{\G}-1,$$
where the last equality holds because of the compatibility of the $\tau$-invariant with the differential operator.
\end{proof}

\end{document}